\newcommand{\be}{\begin{equation}}
\newcommand{\ee}{\end{equation}}
\newcommand{\bea}{\begin{eqnarray*}}
\newcommand{\eea}{\end{eqnarray*}}
\newcommand{\ba}{\begin{array}}
\newcommand{\ea}{\end{array}}
\newcommand{\bi}{\begin{itemize}}
\newcommand{\ei}{\end{itemize}}
\newcommand{\bc}{\begin{center}}
\newcommand{\ec}{\end{center}}
\newcommand{\bfr}{\begin{flushright}}
\newcommand{\efr}{\end{flushright}}
\newtheorem{Pa}{Paper}[section]
\newtheorem{theorem}[Pa]{{\bf Theorem}}
\newtheorem{Dn}[Pa]{{\bf Definition}}
\newtheorem{Cy}[Pa]{{\bf Corollary}}
\newtheorem{Rk}[Pa]{{\bf Remark}}
\begin{document}

\title{Approximation by polynomials on quaternionic compact sets}
\author{Sorin G. Gal\\
University of Oradea\\
Department of Mathematics\\ and Computer Science\\
Str. Universitatii Nr. 1\\
410087 Oradea,
Romania \\
galso@uoradea.ro
\and Irene Sabadini\\
Dipartimento di Matematica\\
Politecnico di Milano\\
Via Bonardi 9\\
20133 Milano, Italy\\
irene.sabadini@polimi.it}

\date{}
\maketitle

\begin{abstract}
In this paper we obtain several extensions to the quaternionic setting of some results concerning the approximation by polynomials of functions continuous on a compact set and holomorphic in its interior. The results include approximation on compact starlike sets and compact axially symmetric sets. The cases of some concrete particular sets are described in details, including quantitative estimates too.
\end{abstract}

\textbf{AMS 2010 Mathematics Subject Classification}: Primary 30G35; Secondary 30E10, 41A25.

\textbf{Keywords and phrases}: Mergelyan's theorem, quaternions, Riemann mapping, axially symmetric sets, approximation by polynomials, convolution operators, Cassini pseudo-metric, Cassini cell, order of approximation, slice regular functions.

\section{Introduction}

It is well-known the fact that the Mergelyan's approximation theorem is the ultimate development and generalization of the Weierstrass approximation theorem and Runge's theorem in the complex plane. It can be stated as follows (see \cite{Merge}):

\begin{theorem}\label{Mergelyan}
Let $K$ be a compact subset of the complex plane $\mathbb{C}$ such that $\mathbb{C}\setminus K$ is connected. Then, every continuous function on $K$, $f : K\to \mathbb{C}$, which is holomorphic in the interior of $K$, can be approximated uniformly on $K$ by polynomials.
\end{theorem}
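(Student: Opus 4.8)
The plan is to run the classical constructive argument: regularize $f$ to a smooth function, represent that function via its $\bar\partial$-derivative with the Cauchy--Green formula, localize the representation, replace the Cauchy kernel in each small piece by a function whose only singularities lie in the (connected, unbounded) complement of $K$, and finish with Runge's theorem.

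First I would pass to a smooth approximant. Extend $f$ to a continuous, compactly supported function on $\mathbb C$ whose modulus of continuity is comparable to that of $f$ on $K$ (extend the real and imaginary parts by the formula $\inf_{y\in K}\{u(y)+\omega(|z-y|)\}$ for a concave majorant $\omega$ of the modulus of continuity of $f|_K$). Fix $\delta>0$, take $\Phi_\delta(z)=\delta^{-2}\Phi(z/\delta)$ with $\Phi\in C_c^\infty$ radial, supported in $\{|z|<1\}$ and $\iint\Phi\,dA=1$, and set $F=f*\Phi_\delta$. Then $F\in C_c^\infty(\mathbb C)$, $\|F-f\|_\infty\le C\,\omega(\delta)$, and (using $\iint\bar\partial\Phi_\delta\,dA=0$ and $\iint|\nabla\Phi_\delta|\,dA\le C/\delta$) $\left|\frac{\partial F}{\partial\bar z}\right|\le C\,\omega(\delta)/\delta$ everywhere; moreover $\frac{\partial F}{\partial\bar z}(z)=0$ whenever $f$ is holomorphic on $D(z,\delta)$, in particular whenever $\operatorname{dist}\!\bigl(z,\mathbb C\setminus\operatorname{int}(K)\bigr)>\delta$. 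Hence $X:=\operatorname{supp}\!\left(\frac{\partial F}{\partial\bar z}\right)$ is compact and contained in the $\delta$-neighbourhood of $\mathbb C\setminus\operatorname{int}(K)=\overline{\mathbb C\setminus K}$.

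Next, the Cauchy--Green (Cauchy--Pompeiu) formula gives, for every $z$,
\[
F(z)=\frac1\pi\iint_{X}\frac{(\partial F/\partial\bar\zeta)(\zeta)}{z-\zeta}\,dA(\zeta),
\]
exhibiting $F$ on $K$ as a superposition of Cauchy kernels with poles in $X$, which may however reach into $K$. I would cover $X$ by finitely many open discs $D_j=D(a_j,2\delta)$, $j=1,\dots,n$, centred at points of $X$. Since each $a_j$ lies within $\delta$ of $\mathbb C\setminus K$, and $\mathbb C\setminus K$ is connected and unbounded (so has no bounded complementary components), one checks that each $D_j$ can be assigned a continuum $E_j\subset\mathbb C\setminus K$ with $\operatorname{diam}(E_j)\ge c\,\delta$, $E_j\subset D(a_j,C\delta)$, and $\widehat{\mathbb C}\setminus E_j$ connected (e.g.\ take $E_j$ a simple arc inside a suitable component of $D(a_j,3\delta)\setminus K$). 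The technical heart is now \emph{Mergelyan's lemma}: for such a continuum $E_j$ there is a function $g_j$ holomorphic on $\widehat{\mathbb C}\setminus E_j$ such that, with absolute constants,
\[
|g_j(z)|\le\frac{C}{\delta},\qquad\left|g_j(z)-\frac{1}{z-\zeta}\right|\le\frac{C\,\delta^2}{|z-\zeta|^3}\qquad(z\notin E_j,\ \zeta\in D_j).
\]
Proving this lemma — constructing $g_j$ from the conformal map of $\widehat{\mathbb C}\setminus E_j$ onto the exterior of a disc, expanding $1/(z-\zeta)$ in the uniformizing variable, and estimating the remainder via Koebe-type distortion bounds so that the constants do not depend on the shape of $E_j$ — is the main obstacle of the whole proof.

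Finally I would glue the pieces. Choose a partition of unity $\{\varphi_j\}_{j=1}^n$ subordinate to $\{D_j\}$ with $\sum_j\varphi_j\equiv1$ on $X$, so that $\frac{\partial F}{\partial\bar z}=\sum_j\varphi_j\,\frac{\partial F}{\partial\bar z}$, and set
\[
R(z)=\frac1\pi\sum_{j=1}^{n}\left(\iint\varphi_j(\zeta)\,\frac{\partial F}{\partial\bar\zeta}(\zeta)\,dA(\zeta)\right)g_j(z),
\]
which, being a finite linear combination of the $g_j$, is holomorphic on $\widehat{\mathbb C}\setminus\bigcup_j E_j$ and hence on a neighbourhood of $K$. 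Subtracting from the Cauchy--Green representation, $F-R$ is the sum over $j$ of the integrals of $\varphi_j\,(\partial F/\partial\bar\zeta)$ against $\frac{1}{z-\zeta}-g_j(z)$; estimating with $|\partial F/\partial\bar\zeta|\le C\,\omega(\delta)/\delta$, with the first bound of Mergelyan's lemma for the $O(1)$ discs $D_j$ near $z$ (where $1/(z-\zeta)$ has only an integrable singularity) and the second bound for the discs far from $z$ (the factor $\delta^2/|z-\zeta|^3$ summed over dyadic annuli about $z$ converging), together with the bounded overlap of the cover, yields $\|F-R\|_K\le C\,\omega(\delta)$. Since $R$ is holomorphic on a neighbourhood of $K$ and $\mathbb C\setminus K$ is connected, Runge's theorem produces a polynomial $P$ with $\|R-P\|_K<\omega(\delta)$, and then $\|f-P\|_K\le\|f-F\|_K+\|F-R\|_K+\|R-P\|_K\le C\,\omega(\delta)\to0$ as $\delta\to0$, which proves the theorem. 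Keeping track of $\delta$ and of the degree of $P$ in Runge's step throughout also yields the quantitative estimates on which the rest of the paper builds.
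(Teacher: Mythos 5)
The paper itself offers no proof of this theorem: it is quoted as classical background with a citation to Mergelyan's article, so the only meaningful comparison is with the standard literature proof (e.g.\ Chapter~20 of \cite{rudin}), which is exactly the argument you sketch: mollification with control of $\bar\partial F$ by $\omega(\delta)/\delta$, the Cauchy--Green representation, a cover of $\operatorname{supp}(\bar\partial F)$ by discs of radius $\asymp\delta$ reaching into the connected unbounded complement, replacement of the Cauchy kernel by functions holomorphic off continua $E_j\subset\mathbb C\setminus K$, and Runge. The architecture, the choice of scales, and the dyadic summation at the end are all the right ones.

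There is, however, a genuine error in the one step you yourself single out as the heart of the matter. The approximation lemma as you state it --- a single function $g_j(z)$, \emph{independent of} $\zeta$, with $|g_j(z)-(z-\zeta)^{-1}|\le C\delta^2/|z-\zeta|^3$ for all $\zeta\in D_j$ --- is false. For $|z-a_j|=d\gg\delta$ and $\zeta_1,\zeta_2$ diametrically opposite points of $D_j$ one has $|(z-\zeta_1)^{-1}-(z-\zeta_2)^{-1}|\asymp\delta/d^2$, while your bound would force this difference to be $O(\delta^2/d^3)$; no choice of $g_j$ can do better, so a $\zeta$-independent approximant only achieves an error of order $\delta/|z-\zeta|^2$. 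That weaker decay is exactly not summable in the way you need: your dyadic summation then gives $\|F-R\|_K\le C\,\omega(\delta)\log(1/\delta)$, which does not tend to $0$ for a general modulus of continuity. The correct lemma (Lemma~20.2 of \cite{rudin}) produces an approximant $Q_j(\zeta,z)=g_j(z)+(\zeta-b_j)g_j(z)^2$ that is \emph{affine in} $\zeta$ --- a first-order correction in the pole location --- and it is precisely this correction that upgrades the error to $\delta^2/|z-\zeta|^3$. The rest of your argument survives the repair: $R(z)$ becomes $\sum_j\bigl(A_jg_j(z)+B_jg_j(z)^2\bigr)$ with $A_j=\frac1\pi\iint\varphi_j\,\bar\partial F\,dA$ and $B_j=\frac1\pi\iint(\zeta-b_j)\varphi_j\,\bar\partial F\,dA$, still holomorphic off $\bigcup_jE_j$, so the Runge step goes through unchanged. (A side remark: your closing claim that tracking $\delta$ and the degree in Runge's theorem yields the quantitative estimates of the paper is an overstatement; those are obtained there by explicit convolution operators, and Runge's theorem by itself gives no degree control.)
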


Notice that all the known proofs of this result, based on the methods in complex analysis use the Riemann mapping theorem.

The main goal of the present paper is to extend the Mergelyan's result to the case of slice regular functions of quaternionic variable.
But the attempt to extend this result into its full generality encounters some obstacles which necessarily appear in the proof: firstly,
 a Riemann mapping theorem in the quaternionic setting is available only for a particular class of sets that we denote by $\mathfrak R (\mathbb H)$, as we will show in this paper. However, its validity is unknown in the general case.
 Another obstacle is the fact that the composition  of two slice regular functions
is not necessarily a slice regular function. Consequently, the Mergelyan's theorem in its full generality seems to be
not possible in the quaternionic setting. However, the case in which the Riemann mapping theorem is true, in the quaternionic case, is precisely the case in which the composition of slice regular functions gives a slice regular function.
We will then show that for the class of starlike sets and for sets in $\mathfrak R (\mathbb H)$, it is possible to get an approximation by polynomials in the spirit of Mergelyan's result.

The plan of the paper goes as follows. In Section 2 we present without proofs some concepts and results which will
be useful for the next sections. In Section 3 we discuss the Riemann mapping theorem in the case of sets in $\mathbb{C}$ symmetric with respect to the real axis
  and we prove it for a subclass of axially symmetric sets in  $\mathbb{H}$. In Section 4 firstly we obtain uniform approximation results by polynomials in the cases of starlike sets and axially symmetric sets. Then, by using convolutions with the de la Vall\'ee-Poussin trigonometric kernel, two quantitative approximation results in the cases of compact balls with centers real numbers and of Cassini-cells are proved. Also,
  we point out that by using other well known trigonometric kernels (different from that of the de la Vall\'ee-Poussin), in these last two cases we obtain even essentially better quantitative estimates, in terms of $\omega_{p}(f ; 1/n)$, $p\ge 2$ and $E_{n}(f)$-the best approximation quantity. The section ends with an uniform approximation result on the boundary of open sets in $\mathfrak R(\mathbb H)$ by polynomials in $q$ and $q^{-1}$.

Note that everywhere in the paper the symbol $\subset $ will denote strict inclusion and when we allow equality we will use the symbol $\subseteq$.

\section{Preliminary results}
The noncommutative field $\mathbb{H}$ of quaternions consists of elements of the form
$q=x_0  + x_{1} i +x_{2} j + x_{3} k$, $x_i\in \mathbb{R}$, $i=0,1, 2,3$,
where the imaginary units $i, j, k$ satisfy
$$i^2=j^2=k^2=-1, \ ij=-ji=k, \ jk=-kj=i, \ ki=-ik=j.$$ The real number $x_0$ is called real part of $q$
while $ x_{1} i +x_{2} j + x_{3} k$ is called imaginary part of $q$.
We define the norm of a quaternion $q$ as
$\|q\|=\sqrt{x_0^2+x_1^2 +x_2^2+x_3^3}$.
By
$\mathbb B$ we denote the open unit ball in $\mathbb H$, i.e.
 $$\mathbb{B}=\{q = x_0+i x_{1} + j x_{2} + k x_{3}, \mbox{ such that } x_0^2+x_{1}^{2}+x_{2}^{2}+x_{3}^{3}<1\}$$
while by  $\mathbb{S}$ we denote the unit sphere of purely imaginary quaternion, i.e.
$$\mathbb{S}=\{q = i x_{1} + j x_{2} + k x_{3}, \mbox{ such that } x_{1}^{2}+x_{2}^{2}+x_{3}^{3}=1\}.$$

Note that if $I\in \mathbb{S}$, then $I^{2}=-1$.  For any fixed $I\in\mathbb{S}$ we define $\mathbb{C}_I:=\{x+Iy; \ |\ x,y\in\mathbb{R}\}$, which can be identified with a complex plane.
Obviously, the real axis belongs to $\mathbb{C}_I$ for every $I\in\mathbb{S}$.
Any non real quaternion $q$ is uniquely associated to the element $I_q\in\mathbb{S}$
defined by $I_q:=( i x_{1} + j x_{2} + k x_{3})/\|  i x_{1} + j x_{2} + k x_{3}\|$ and so $q$ belongs to the complex plane $\mathbb{C}_{I_q}$.

The functions we will consider in this paper are the so called slice regular functions of a quaternion variable first introduced in \cite{gs}. For a treatment of these functions and their applications we refer the interested reader to \cite{CSS}.
\begin{Dn}\label{slice regular}
 Let $U$ be an open set
$\mathbb{H}$. A real differentiable
function $f:U \to \mathbb{H}$ is said to be slice regular if,
for every $I \in \mathbb{S}$, its restriction $f_I$ of $f$ to the complex
plane $\mathbb C_I$ satisfies
$$
\overline{\partial}_If(x+Iy)=\frac{1}{2}\Big(\frac{\partial}{\partial x}
+I\frac{\partial}{\partial y}\Big)f_I(x+Iy)=0.
$$
 The set of slice regular functions on $U$ will be denoted by $\mathcal{R}(U)$.
\end{Dn}
\begin{Rk}{\rm
The class of slice regular functions contains converging power series, thus also polynomials, of the variable $q$ and with quaternionic coefficients written  on the right.
In this paper,  the terminology "polynomial" will always refer to polynomials of the form $\sum_{k=0}^n q^ka_k$, i.e. with coefficients on the right.
}
\end{Rk}
We introduce a suitable definition of derivative:
\begin{Dn}
Let $U$ be an open set in $\mathbb{H}$, and let $f:U \to \mathbb{H}$
be a slice regular function. The slice derivative $\partial_s f$ of $f$,
is defined by:
\begin{displaymath}
\partial_s(f)(q) = \left\{ \begin{array}{ll}
\partial_I(f)(q)=\frac{1}{2}\Big(\frac{\partial}{\partial x}
+I\frac{\partial}{\partial y}\Big)f_I(x+Iy) & \textrm{ if $q=x+Iy$, \ $y\neq 0$},\\ \\
\displaystyle\frac{\partial f}{\partial x} (x) & \textrm{ if\  $q=x\in\mathbb{R}$.}
\end{array} \right.
\end{displaymath}
\end{Dn}
The definition of slice derivative is well posed because it is applied
only to slice regular functions. Moreover
$$
\frac{\partial}{\partial x}f(x+Iy)=
-I\frac{\partial}{\partial y}f(x+Iy)\qquad \forall I\in\mathbb{S},
$$
and therefore, analogously to what happens in the complex case,
$$ \partial_s(f)(x+Iy) =
\partial_I(f)(x+Iy)=\frac{\partial}{\partial x}(f)(x+Iy).
$$
For simplicity, we will write $f'(q)$ instead of $\partial_s(f)(q)$.
\\
Among the open sets in $\mathbb H$ there are some that are important since they are the domains on which slice regular functions possess nice properties.
\begin{Dn}
Let $U \subseteq \mathbb H$. We say that $U$ is
\textnormal{axially symmetric} if, for all $x+Iy \in U$,  all the elements $x+\mathbb{S}y=\{x+Jy\ | \ J\in\mathbb{S}\}$ are contained in $U$.
We say that $U$ is a {\em slice domain} (or s-domain for short) if it is a connected set whose intersection with every complex plane $\mathbb C_I$ is connected.
\end{Dn}
The following result shows that the values of a slice regular function defined on an axially symmetric s-domain can be computed if the values of a restriction to a complex plane are known:
\begin{theorem}[Representation Formula]\label{Repr_formula} Let
$f$ be a slice regular function defined an axially symmetric s-domain $U\subseteq  \mathbb{H}$. Let
$J\in \mathbb{S}$ and let $x\pm Jy\in U\cap\mathbb C_J$.  Then the following equality holds for all $q=x+Iy \in U$:
\begin{equation}\label{distribution_mon}
\begin{split}
f(x+Iy) &=\frac{1}{2}\Big[   f(x+Jy)+f(x-Jy)\Big]
+I\frac{1}{2}\Big[ J[f(x-Jy)-f(x+Jy)]\Big]\\
&= \frac{1}{2}(1-IJ) f(x+Jy)+\frac{1}{2}(1+IJ) f(x-Jy).
\end{split}
\end{equation}
\end{theorem}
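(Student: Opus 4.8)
The plan is to produce a function on $U$ which is slice regular, which agrees with $f$ on one complex slice, and which equals the right-hand side of \eqref{distribution_mon} everywhere, and then to finish with the Identity Principle for slice regular functions on s-domains. For $q=x+Iy\in U$ (written with $y\ge 0$ and $I\in\mathbb{S}$, so that $q$ fixes the pair $(x,y)$ and, whenever $y>0$, also $I=I_q$) set
\[
g(x+Iy):=\tfrac12(1-IJ)f(x+Jy)+\tfrac12(1+IJ)f(x-Jy).
\]
First I would check that $g$ is well defined on all of $U$: by axial symmetry $x+Iy\in U$ forces the whole sphere $x+\mathbb{S}y$ into $U$, so in particular $x\pm Jy\in U\cap\mathbb{C}_J$ and the right-hand side makes sense; replacing $(I,y)$ by $(-I,-y)$, which names the same quaternion, leaves the expression unchanged, so there is no ambiguity; and at a real point the formula returns $f(x)$ for any choice of $I$. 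Real differentiability of $g$ is inherited from that of $f$.

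The main step is the direct verification that $g\in\mathcal{R}(U)$. Fix $I\in\mathbb{S}$ and restrict to $\mathbb{C}_I$; since $(x,y)\mapsto f(x\pm Jy)$ carries no dependence on $I$, the only $I$-dependence of $g_I$ sits in the constant coefficients $\tfrac12(1\mp IJ)$. Slice regularity of $f$, namely $\overline{\partial}_J f_J=0$, gives $\frac{\partial}{\partial y}f(x\pm Jy)=\pm J\,f'(x\pm Jy)$ and $\frac{\partial}{\partial x}f(x\pm Jy)=f'(x\pm Jy)$. Substituting these into $\overline{\partial}_I g_I=\tfrac12\big(\frac{\partial}{\partial x}+I\frac{\partial}{\partial y}\big)g_I$, the coefficient of $f'(x+Jy)$ equals, up to the factor $\tfrac14$, the quantity $(1-IJ)+I(1-IJ)J$, and that of $f'(x-Jy)$ equals $(1+IJ)-I(1+IJ)J$; each of these collapses to $1-I^2J^2=0$ using only $I^2=J^2=-1$ and associativity (no reordering of the units is required). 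Hence $\overline{\partial}_I g_I=0$ for every $I\in\mathbb{S}$, so $g$ is slice regular on $U$.

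To conclude, setting $I=J$ yields $g(x+Jy)=\tfrac12(1-J^2)f(x+Jy)+\tfrac12(1+J^2)f(x-Jy)=f(x+Jy)$, so $g\equiv f$ on the nonempty open subset $U\cap\mathbb{C}_J$ of $\mathbb{C}_J$. Since $U$ is an s-domain and $f,g\in\mathcal{R}(U)$, the Identity Principle for slice regular functions (see \cite{CSS}) forces $f\equiv g$ throughout $U$. Finally, expanding
\[
\tfrac12(1-IJ)f(x+Jy)+\tfrac12(1+IJ)f(x-Jy)=\tfrac12\big[f(x+Jy)+f(x-Jy)\big]+I\tfrac12\big[J\big(f(x-Jy)-f(x+Jy)\big)\big]
\]
recovers the first displayed form in \eqref{distribution_mon}.

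I expect the substantive content to be twofold. The first point is the algebraic check that $g$ is slice regular: the computation is short, but because $\mathbb{H}$ is noncommutative one must keep $I$ consistently on the left and use the slice regularity of $f$ in the precise form $\partial_y f_J=J\,\partial_x f_J$. The second is the appeal to the Identity Principle, which is exactly what promotes the slice-by-slice identity on $\mathbb{C}_J$ to an identity on the axially symmetric s-domain $U$; this is where the s-domain hypothesis is essential. A reader preferring not to cite the Identity Principle may instead establish \eqref{distribution_mon} first for the monomials $q\mapsto(q-a)^n$ at a real point $a\in U$, pass to local power series $\sum_n(q-a)^na_n$ by additivity of the right-hand side in $f$ and its homogeneity under right multiplication of the coefficients, and then propagate over $U$ — though this last propagation again relies on analytic continuation.
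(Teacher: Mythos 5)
Your argument is correct, but note that the paper itself gives no proof of this statement: Section 2 collects preliminary results without proofs, and the Representation Formula is quoted from the literature (it is Theorem 3.2 of the extension-results paper by Colombo, Gentili, Sabadini and Struppa; see also \cite{CSS}). Your proof coincides with the standard one there: define $g(x+Iy)=\frac12(1-IJ)f(x+Jy)+\frac12(1+IJ)f(x-Jy)$, check $\overline{\partial}_Ig_I=0$ on every slice via $1-I^2J^2=0$, observe $g=f$ on $U\cap\mathbb{C}_J$, and invoke the Identity Principle on the s-domain $U$ (which is not circular, since that principle is proved independently by restricting to slices through $U\cap\mathbb{R}$).
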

The Representation Formula yields:
\begin{Cy}\label{ext}
Let $\Omega_J$ be a domain in $\mathbb C_J$ symmetric with respect to the real axis and such that $\Omega_J\cap\mathbb R\not=\emptyset$. Let $U$ be the axially symmetric s-domain defined by
$$
U=\bigcup_{x+Jy\in\Omega_J,\ I\in\mathbb S} \{x+Iy\}.
$$
If $f:\Omega_J\to \mathbb H$ satisfies $\overline{\partial}_J f=0$ then the function
$$
{\rm ext}(f)(x+Iy)=\frac{1}{2}\Big[   f(x+Jy)+f(x-Jy)\Big]
+I\frac{1}{2}\Big[ J[f(x-Jy)-f(x+Jy)]\Big]
$$
is the unique slice regular extension of $f$ to $U$.
\end{Cy}

\begin{Dn}\label{axial completion}
Let $\Omega_J$ be any open set in $\mathbb C_J$ and let
\begin{equation}
U=\bigcup_{x+Jy\in\Omega_J,\ I\in\mathbb S} \{x+Iy\}.
\end{equation}
We say that $U$ is the axially symmetric completion of $\Omega_J$ in $\mathbb H$.
\end{Dn}
\begin{Rk}{\rm
The Representation formula implies that
if $U\subseteq\mathbb H$ is an axially symmetric s-domain and $f\in\mathcal R(U)$ then
$f(q)=f(x+Iy)=\alpha(x,y)+I\beta(x,y),$
 where $\alpha(x,y)=\frac{1}{2}[   f(x+Jy)+f(x-Jy)]$, $\beta=\frac{1}{2}J[   f(x-Jy)-f(x-Jy)]$ do not depend on $J\in\mathbb S$. So $\alpha$,  $\beta$ are $\mathbb{H}$-valued differentiable functions, $\alpha(x,y)=\alpha(x,-y)$, $\beta(x,y)=-\beta(x,-y)$ for all $x+Iy\in U$ and moreover satisfy the Cauchy-Riemann system.
 }
 \end{Rk}
 Thus, in view of the previous remark, slice regular functions are a subclass of the set of functions defined below (see \cite{gp1} for a treatment in a more general setting):
  \begin{Dn}\label{slice function}
  Let  $U\subseteq\mathbb H$ be an axially symmetric open set.
  Functions of the form $f(q)=f(x+Iy)=\alpha(x,y)+I\beta(x,y)$, where $\alpha$ $\beta$ are continuous  $\mathbb H$-valued functions such that $\alpha(x,y)=\alpha(x,-y)$, $\beta(x,y)=-\beta(x,-y)$ for all $x+Iy\in U$ are called {\em continuous slice functions}.
 \end{Dn}
 \begin{Rk}\label{reprform_continuous}{\rm
 \begin{enumerate}
 \item[1)] Continuous slice functions satisfy the Representation formula (\ref{distribution_mon}), see \cite{gp1}, section 3.3.
 \item[2)] Let  $U\subseteq\mathbb H$ be an axially symmetric open set.
  Functions of the form $f(q)=f(x+Iy)=\alpha(x,y)+I\beta(x,y)$, where $\alpha$ $\beta$ are continuously differentiable, satisfy the Cauchy-Riemann system and such that $\alpha(x,y)=\alpha(x,-y)$, $\beta(x,y)=-\beta(x,-y)$ for all $x+Iy\in U$ are called {\em slice regular} according to the terminology in \cite{gp1}. As a consequence of the Representation Formula (\ref{distribution_mon}), all the slice regular functions according to Definition \ref{slice regular} which are defined on axially symmetric s-domains are also slice regular according to \cite{gp1} and thus they are of the form $f(q)=f(x+Iy)=\alpha(x,y)+I\beta(x,y)$.
 \end{enumerate}
 }
 \end{Rk}
A subclass, denoted by the letter $\mathcal{N}$ (see e.g. \cite{CSS}, p. 152, Definition 4.11.2), of slice regular functions on an open set $U$ is defined as follows:
$$
 \mathcal{N}(U)=\{ f \ {\rm slice\ regular\ in}\ U\ :  \ f(U\cap \mathbb{C}_I)\subseteq  \mathbb{C}_I,\ \  \forall I\in \mathbb{S}\}.
$$
The class  $\mathcal{N}(U)$
 includes all elementary transcendental functions.\\
If one considers a ball $B(0,R)$ with center at the origin, it is immediate that a function slice regular on the ball belongs to $\mathcal{N}(B(0,R))$ if and only if its
power series expansion has real coefficients. Such functions are said to be {\em real}. More in general, if $U$ is an axially symmetric s-domain, then  $f\in\mathcal{N}(U)$ if and only if $f(q)=f(x+Iy)=\alpha(x,y)+I\beta(x,y)$
with $\alpha$, $\beta$ real valued, see \cite{CSremarks}, Proposition 2.1 and \cite{GMP}, Lemma 6.8.
\\
If $U$ is axially symmetric and if we denote by $\overline{q}$ the conjugate of a quaternion $q$, it can be shown that a function $f$ belongs to $\mathcal{N}(U)$
if and only is it satisfies $f(q)=\overline{f(\bar q)}$.
Note that complex functions defined on open sets $G\subset \mathbb{C}$ symmetric with respect to the real axis and such that $\overline{f(\bar z)}=f(z)$ are called in the literature intrinsic, see \cite{rinehart}.
In analogy with the complex case, we say that the functions in  the class $\mathcal N$ are  {\em quaternionic intrinsic}.

\begin{Rk}{\rm
As we shall see, quaternionic intrinsic functions will play an important role in this paper. At the moment, we just recall that the composition of slice regular functions, when defined, does not give a slice regular function, in general. However, if $f\in\mathcal N(U)$, $g\in\mathcal R(V)$ and  $f(U)\subseteq V$ then the composition $g(f(q))$  is slice regular.
}
\end{Rk}
We end this section by recalling
the following consequence of Runge's theorem (see Theorem 4.11 in \cite{runge}) which allows to approximate a slice regular function with polynomials. In the statement
$\overline{\mathbb{C}}_I$ denotes the extended complex plane ${\mathbb{C}}_I$.
\begin{theorem}\label{rungepoly}
Let $K$ be an axially symmetric compact set such that $\overline{\mathbb{H}}\setminus K$ is connected and such that $\overline{\mathbb{C}}_I\setminus (K\cap\mathbb{C}_I)$ is connected for all $I\in\mathbb{S}$. Let $f$ be slice regular in the open set $\Omega$ with $\Omega\supset K$. Then there exists a sequence $\{P_n\}$ of polynomials such that $P_n(q)\to f(q)$ uniformly on $K$.
\end{theorem}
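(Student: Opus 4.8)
The plan is to transfer the problem, by means of the Representation Formula (Theorem~\ref{Repr_formula}), to the classical one-variable situation on a single slice $\mathbb C_I$, where the ordinary Runge theorem applies, and then to propagate the approximation to all of $K$.

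First I would fix $I\in\mathbb S$ and set $K_I:=K\cap\mathbb C_I$, a compact subset of the complex plane $\mathbb C_I$. Since $K$ is axially symmetric, $K_I$ is symmetric with respect to the real axis (if $x+Iy\in K$ then $x-Iy=x+(-I)y\in K$ because $-I\in\mathbb S$), and moreover $x\pm Iy\in K_I$ whenever $x+Ly\in K$. By hypothesis $\overline{\mathbb C}_I\setminus K_I$ is connected, so $K_I$ has no bounded complementary components in $\mathbb C_I$. The restriction $f_I$ of $f$ to $\Omega\cap\mathbb C_I$ is an $\mathbb H$-valued function satisfying $\overline{\partial}_I f_I=0$ on the open neighbourhood $\Omega\cap\mathbb C_I$ of $K_I$. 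Applying the classical Runge theorem to $f_I$ — in its vector-valued form, or, after choosing $J\in\mathbb S$ with $J\perp I$ and splitting $f_I=F+GJ$ with $F,G:\Omega\cap\mathbb C_I\to\mathbb C_I$ holomorphic, to $F$ and $G$ separately — and pushing the poles of the approximating rational functions off to infinity (possible precisely because $\overline{\mathbb C}_I\setminus K_I$ is connected), I obtain polynomials $P_n(q)=\sum_{k=0}^{d_n}q^k a_k^{(n)}$ with $a_k^{(n)}\in\mathbb H$ such that $P_n(z)\to f_I(z)$ uniformly for $z\in K_I$.

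It then remains to upgrade uniform convergence on $K_I$ to uniform convergence on all of $K$, and here the Representation Formula enters. Each $P_n$ is a polynomial, hence slice regular on all of $\mathbb H$, which is an axially symmetric slice domain, so $P_n$ satisfies (\ref{distribution_mon}); and $f$, being slice regular, satisfies it on a suitable axially symmetric neighbourhood of $K$ contained in $\Omega$. Using $x\pm Iy\in K_I$ and $\|1\pm LI\|\le 2$, for $q=x+Ly\in K$ one gets
\begin{align*}
\|f(q)-P_n(q)\|
&=\Big\|\tfrac{1}{2}(1-LI)\big(f(x+Iy)-P_n(x+Iy)\big)+\tfrac{1}{2}(1+LI)\big(f(x-Iy)-P_n(x-Iy)\big)\Big\|\\
&\le 2\sup_{z\in K_I}\|f(z)-P_n(z)\|,
\end{align*}
and taking the supremum over $q\in K$ and letting $n\to\infty$ finishes the proof.

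The step I expect to be the main obstacle is the last one. Strictly speaking the Representation Formula is stated for functions slice regular on an axially symmetric slice domain, so one must first secure such a domain (or at least a domain on which $f$ obeys (\ref{distribution_mon})) around $K$. This is where the axial symmetry of $K$ is used: since every $q=x+Ly\in K$ carries with it the whole sphere $x+\mathbb S y\subseteq K$, and the rotations $q\mapsto aq\overline a$ with $\|a\|=1$ are isometries mapping $K$ onto itself, a sufficiently small $\varepsilon$-neighbourhood of $K$ inside $\Omega$ is automatically axially symmetric, and one is reduced (component by component) to the slice-domain case. Once the applicability of (\ref{distribution_mon}) to $f$ near $K$ is justified, everything else is the classical Runge theorem plus the elementary bound $\|1\pm LI\|\le 2$. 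Finally, the hypothesis that $\overline{\mathbb C}_I\setminus(K\cap\mathbb C_I)$ be connected for \emph{every} $I$, rather than just one, is a convenience: by axial symmetry all the slices $K\cap\mathbb C_I$ are isometric, so the condition for one $I$ already implies it for all.
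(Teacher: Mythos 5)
The paper does not actually prove this statement: it is quoted as Theorem 4.11 of \cite{runge}, so there is no internal proof to compare with. Your strategy --- classical Runge on a single slice, then propagation to all of $K$ via the Representation Formula --- is the natural one, and the slice-wise part of your argument is correct: the splitting $f_I=F+GJ$, pole-pushing to infinity using the connectedness of $\overline{\mathbb{C}}_I\setminus K_I$, reassembly into a polynomial $\sum q^k a_k$ with right quaternionic coefficients, the transfer of the connectedness hypothesis between slices by the rotations $q\mapsto aq\overline{a}$, and the bound coming from $\|\frac{1}{2}(1\pm LI)\|\le 1$ are all fine.

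The genuine gap is exactly the step you flagged and then dismissed: the applicability of (\ref{distribution_mon}) to $f$ near $K$. A small $\varepsilon$-neighbourhood $N_\varepsilon(K)\subseteq\Omega$ is indeed axially symmetric (your rotation argument works), but its connected components need not be s-domains: if $K$ contains a sphere $x_0+\mathbb{S}y_0$ with $y_0>0$ and $N_\varepsilon(K)$ does not reach the real axis there, the corresponding component meets each $\mathbb{C}_I$ in two disjoint pieces around $x_0\pm Iy_0$. On such a component a function that is slice regular in the sense of Definition \ref{slice regular} need not satisfy the Representation Formula, because the two pieces of each slice can be filled in independently. Concretely, $K=\mathbb{S}$ satisfies every hypothesis of the theorem ($\overline{\mathbb{H}}\setminus\mathbb{S}$ and $\overline{\mathbb{C}}_I\setminus\{I,-I\}$ are connected), and on the open set $\Omega=\{q:\|q^2+1\|<1/2\}\supset\mathbb{S}$, which is disjoint from $\mathbb{R}$, the function $f(x+Iy)=IiI$ (for $y>0$) is real analytic and constant on every slice, hence slice regular per Definition \ref{slice regular}; yet $f(I)=IiI$ is not of the form $\alpha+I\beta$ (one has $f(i)=-i$ but $f(j)=i$), whereas every polynomial $\sum q^k a_k$ restricted to $\mathbb{S}$ equals $\alpha+I\beta$ for fixed $\alpha,\beta$, and so does any uniform limit of such. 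So the reduction ``component by component to the slice-domain case'' cannot be carried out, and the statement needs the additional (implicit in the cited source, and satisfied in all of the paper's applications) hypothesis that $f$ obeys the Representation Formula near $K$ --- for instance that $\Omega$ is an axially symmetric s-domain, or that $f$ is a slice function in the sense of \cite{gp1}. With that hypothesis secured, the rest of your proof goes through.
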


\section{Riemann mappings for axially symmetric sets}
In this section we deal with the generalization  to the quaternionic setting of the famous Riemann mapping theorem which is of great importance, since it enters in the proof of several approximation results.
Let us begin by recalling the result in the complex plane :
\begin{theorem}[Riemann Mapping Theorem] Let $G\subset \mathbb C$ be a simply connected domain,  $z_0\in G$ and let  $\mathbb{D}=\{z\in \mathbb{C} \, :\,  |z|<1\}$ denote the open unit disk. Then there exists a unique bijective analytic function
$f:\ G\to\mathbb D$ such that $f(z_0)=0$, $f^{\prime}(z_0)>0$.
\end{theorem}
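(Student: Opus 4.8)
The plan is to prove this by the classical Koebe argument via normal families; this is also the proof whose architecture one must later transplant, with care, to the quaternionic setting. Note first that by the paper's convention the inclusion $G\subset\mathbb C$ is \emph{strict}, i.e. $G\neq\mathbb C$, and this is indispensable: $\mathbb C$ itself admits no such $f$, by Liouville's theorem. Uniqueness is the easy half. If $f_1,f_2$ both satisfy the conclusion, then $h:=f_2\circ f_1^{-1}$ is a biholomorphic self-map of $\mathbb D$ with $h(0)=0$ and $h'(0)>0$; applying the Schwarz lemma to $h$ and to $h^{-1}$ forces $h'(0)=1$, and the equality case of the Schwarz lemma then gives $h=\mathrm{id}$, hence $f_1=f_2$.

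For existence, I would work with the family
$$
\mathcal F=\{\,h:G\to\mathbb D \ :\ h \text{ holomorphic and injective},\ h(z_0)=0,\ h'(z_0)>0\,\}.
$$
The first step is to check $\mathcal F\neq\emptyset$. Pick $a\in\mathbb C\setminus G$; since $G$ is simply connected and $z-a$ is nonvanishing there, a holomorphic branch $\varphi(z)=\sqrt{z-a}$ exists on $G$, is injective, and satisfies $\varphi(z)\neq-\varphi(w)$ for all $z,w\in G$. As $\varphi(G)$ is open it contains a disc $D(\varphi(z_0),\rho)$, and then disjointness of $\varphi(G)$ and $-\varphi(G)$ gives $|\varphi(z)+\varphi(z_0)|\ge\rho$ on $G$; hence $z\mapsto\rho/\big(2(\varphi(z)+\varphi(z_0))\big)$ maps $G$ injectively into $\mathbb D$, and post-composing with a disc automorphism sending the image of $z_0$ to $0$ and with a unimodular rotation making the derivative at $z_0$ positive yields an element of $\mathcal F$.

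The second step produces an extremal map. The Schwarz lemma bounds $h'(z_0)$ uniformly over $h\in\mathcal F$, so $s:=\sup_{h\in\mathcal F}h'(z_0)\in(0,\infty)$; choosing $h_n\in\mathcal F$ with $h_n'(z_0)\to s$, Montel's theorem gives a locally uniformly convergent subsequence with limit $f:G\to\overline{\mathbb D}$, where $f(z_0)=0$ and $f'(z_0)=s>0$. Thus $f$ is nonconstant, so $f(G)\subseteq\mathbb D$ by the open mapping theorem, and $f$ is injective by Hurwitz's theorem; hence $f\in\mathcal F$ attains the supremum. The third step is surjectivity, the Koebe ``squaring trick'': if some $w_0\in\mathbb D\setminus f(G)$ existed, composing $f$ with the automorphism of $\mathbb D$ interchanging $0$ and $w_0$ would give a nonvanishing map on $G$, which (again by simple connectedness) has a holomorphic square root $F$; after normalization $F$ becomes some $g\in\mathcal F$ with $f=\psi\circ g$, where $\psi:\mathbb D\to\mathbb D$ fixes $0$ and factors through $w\mapsto w^2$, hence is not a rotation, so $|\psi'(0)|<1$ by the strict Schwarz lemma. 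Differentiating $f=\psi\circ g$ at $z_0$ gives $g'(z_0)=s/|\psi'(0)|>s$, contradicting maximality; therefore $f(G)=\mathbb D$, and $f$ is the map sought.

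The only genuinely delicate ingredient — and the only place where simple connectedness of $G$ enters — is the existence of holomorphic square roots in the first and third steps; the remaining tools (Montel, Hurwitz, the Schwarz lemma, the open mapping theorem) are soft and robust. I expect precisely this square-root/logarithm construction to be the point that resists a naive generalization in the next section, where ordinary holomorphy is replaced by slice regularity and $G$ by an axially symmetric set: root extraction and composition are well behaved only inside the quaternionic intrinsic class $\mathcal N$, which is why a quaternionic Riemann mapping theorem can be expected only for a restricted class of domains.
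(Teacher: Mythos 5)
Your proof is correct: it is the standard Koebe normal-families argument (nonemptiness of the competing family via a square-root branch, extremality of the derivative at $z_0$ via Montel and Hurwitz, surjectivity via the squaring trick, uniqueness via the Schwarz lemma), and every step checks out, including the observation that the paper's convention $G\subset\mathbb C$ means $G\neq\mathbb C$. Note, however, that the paper does not prove this statement at all --- it is recalled as classical background (the standard references Ahlfors and Rudin are in the bibliography), so there is no in-paper argument to compare yours against. Your closing remark correctly identifies the square-root/composition step as the obstruction to a naive quaternionic generalization, which is indeed why the paper restricts to axially symmetric domains and the intrinsic class $\mathcal N$.
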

\begin{Rk}{\rm
The theorem holds more general for simply connected open subsets of the Riemann sphere which both lack at least two points of the sphere.
}
\end{Rk}
Our purpose is to generalize this theorem to the case of simply connected domains in $\mathbb H$. The problem is to characterize those open sets which can be mapped bijectively onto the unit ball of $\mathbb H$ by a slice regular function $f$ satisfying additional conditions prescribing its value and the value of its derivative at one point.

As we shall see, we have an answer which works for the class of  axially symmetric  s-domains which are simply connected that is, in practice, the class of all sets of our interest.
\\

     In \cite{duren}  the author defines functions which are {\em typically real} as those functions defined on the open unit disc $\mathbb D$ which are univalent and take real values just on the real line.  These functions have real coefficients when expanded into power series and so they are (complex) intrinsic. The image of such mappings is symmetric with respect to the real line, see  \cite{duren}, p.55. We have the following result which is basically already known, but we write the proof for the reader's convenience:

\begin{theorem}\label{Tm:Riemann intrinsic} Let $G\subset\mathbb C$, $G$ nonempty, be a simply connected domain such that $G\cap\mathbb R\not=\emptyset$.
For a fixed $x_0\in G\cap \mathbb{R}$, there  exists a unique bijective,  analytic function
$f:\ G\to\mathbb D$ with $f(x_0)=0$, $f'(x_0)>0$; then $f$ is  such that $f^{-1}$ is typically real
 if and only if $G$ is symmetric with respect to the real axis.
\end{theorem}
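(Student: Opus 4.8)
The existence and uniqueness of $f$ is exactly the classical Riemann Mapping Theorem applied with base point $x_0$, so the content of the statement lies entirely in the stated equivalence, and the plan is to prove the two implications separately. For the implication ``$f^{-1}$ typically real $\Rightarrow$ $G$ symmetric'' I would simply invoke the known fact that the image of a typically real function is symmetric with respect to the real axis (\cite{duren}, p.~55): since $f$ is a bijection, $G=f^{-1}(\mathbb D)$ is precisely this image and is therefore symmetric with respect to $\mathbb R$.

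For the converse, assume $G$ is symmetric with respect to $\mathbb R$ and introduce the reflected map
$$
h(z):=\overline{f(\bar z)},\qquad z\in G.
$$
Since $G$ is symmetric, $\bar z\in G$ whenever $z\in G$, so $h$ is well defined; it is holomorphic on $G$ (a conjugation composed with a holomorphic map composed with a conjugation is again holomorphic), it is a bijection of $G$ onto $\mathbb D$ because $w\mapsto\bar w$ maps $\mathbb D$ onto itself, and — here the hypothesis $x_0\in G\cap\mathbb R$ is used — one has
$$
h(x_0)=\overline{f(\bar x_0)}=\overline{f(x_0)}=0,\qquad h'(x_0)=\overline{f'(\bar x_0)}=\overline{f'(x_0)}=f'(x_0)>0,
$$
the last equality holding because $f'(x_0)$ is a positive real number. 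Thus $h$ satisfies the same normalization as $f$, and the uniqueness part of the Riemann Mapping Theorem forces $h=f$, i.e. $f(\bar z)=\overline{f(z)}$ for all $z\in G$. Equivalently, $f$ and hence $f^{-1}:\mathbb D\to G$ are (complex) intrinsic: $f^{-1}(\bar w)=\overline{f^{-1}(w)}$.

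It then remains to read off that $f^{-1}$ is typically real. It is univalent, being a bijection. From $f^{-1}(\bar w)=\overline{f^{-1}(w)}$ we get that $f^{-1}$ is real on $\mathbb D\cap\mathbb R$; conversely, if $f^{-1}(w)\in\mathbb R$ then $f^{-1}(\bar w)=\overline{f^{-1}(w)}=f^{-1}(w)$, and injectivity of $f^{-1}$ yields $\bar w=w$, i.e. $w\in\mathbb R$. Hence $f^{-1}$ takes real values exactly on the real line, which is the definition of typically real.

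I do not anticipate a genuine obstacle: this is the classical symmetry argument for intrinsic Riemann maps, and it is recorded here mainly because it is the template for the quaternionic version proved later. The two routine points that nonetheless need care are (i) verifying that $h$ is holomorphic and computing $h'$ correctly — this is the Schwarz-reflection-type observation, and the only place where complex, as opposed to merely real, differentiability is used — and (ii) the role of the assumption $x_0\in G\cap\mathbb R$, without which the reflected map $h$ would fail to vanish at $x_0$ and the uniqueness argument would collapse.
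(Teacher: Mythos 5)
Your proof is correct and follows essentially the same route as the paper: one direction via the fact that the image of a typically real map is symmetric about $\mathbb R$ (Duren, p.~55), the other via uniqueness in the Riemann Mapping Theorem applied to the reflected map $\overline{f(\bar z)}$. The paper merely cites Ahlfors (Exercise 1, p.~232) for the identity $\overline{f(z)}=f(\bar z)$, whereas you carry out that reflection argument explicitly; the content is the same.
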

  \begin{proof} The map $f:\ G\to\mathbb D$ and such that $f(x_0)=0$, $f'(x_0)>0$ obviously exists by the Riemann mapping theorem. If  $G$ is symmetric with respect to the real axis, then  by the uniqueness of $f$ we have $\overline{f(z)}=f(\bar z)$, see e.g. \cite{ahlfors}, Exercise 1, p. 232 and so $f$ maps bijectively $G\cap\mathbb R$ onto $\mathbb D\cap\mathbb R$ and so
$f^{-1}$ is typically real. Conversely, assume that $f^{-1}:\mathbb D\to G$ is typically real. Then  $G$ is symmetric with respect to the real line.
\end{proof}
\begin{Cy}\label{Riemann intrinsic}
Let $G\subset\mathbb C$, $G$ nonempty, be a simply connected domain such that $G\cap\mathbb R\not=\emptyset$.
For a fixed $x_0\in G\cap \mathbb{R}$, there  exists a unique bijective,  analytic function
$f:\ G\to\mathbb D$ with $f(x_0)=0$, $f'(x_0)>0$; then $f$ is  such that $f^{-1}$ is typically real
 if and only if  $f$ is complex intrinsic.
\end{Cy}
\begin{proof}
If $f^{-1}$ is typically real then $G$ is symmetric with respect to the real line and $\overline{f^{-1}(w)}=f^{-1}(\bar w)$. By setting $w=f(z)$, i.e. $z=f^{-1}(w)$, we get
$$
f(\bar z)=f(\overline{f^{-1}(w)})=f(f^{-1}(\overline{w}))=\overline{w}=\overline{f(z)},
$$
and so $f$ is complex intrinsic. Conversely, let $f$ be complex intrinsic. It means that $f$ is defined on set $G$ symmetric with respect to the real line and $f(\bar z)=\overline{f(z)}$. Then $f^{-1}$ is complex intrinsic by the previous theorem.
\end{proof}
 \begin{Rk}\label{extension}{\rm Let $G\subset \mathbb{C}$ be symmetric with respect to the real axis and let the function $f$ as in the statement of the Riemann mapping theorem be intrinsic, i.e. $\overline{f(\bar z)}=f(z)$. By identifying $\mathbb C$ with $\mathbb C_J$ for some $J\in\mathbb S$ and using the extension formula in Corollary \ref{ext}, we obtain a function ${\rm ext}(f)(q)$ which is quaternionic intrinsic, in fact
\[
 \begin{split}
 {\rm ext}(f)(\bar q)&=\frac{1+IJ}{2} f(z) + \frac{1-IJ}{2} f(\bar z)
= \frac{1+IJ}{2} f(z) + \frac{1-IJ}{2} \overline {f(z)}\\
& ={\rm Re}f(z)+IJ^2 {\rm Im}f(z)=\overline{f(q)}.
\end{split}
\]
Thus ${\rm ext}(f)\in\mathcal N (\Omega_G)$ where $\Omega_G\subset \mathbb{H}$ denotes the axially symmetric completion of $G$.  For the sake of simplicity, we will simply write $f(q)$ instead of ${\rm ext}(f)(q)$.
}
\end{Rk}
\begin{Dn} We will denote by $\mathfrak{R}(\mathbb H)$ the class of  axially symmetric open sets $\Omega$ in $\mathbb H$ such that $\Omega\cap\mathbb C_I$ is simply connected for every $I\in\mathbb S$.
\end{Dn}
Note that $\Omega\cap\mathbb C_I$ is simply connected for every $I\in\mathbb S$ and thus it is connected, so $\Omega$ is an s-domain.\\
In view of Remark \ref{extension} and of Corollary \ref{Riemann intrinsic} we have the following :
\begin{Cy}
Let $\Omega\in \mathfrak{R}(\mathbb H)$,  $\mathbb{B}\subset \mathbb{H}$ be the open unit ball and let  $x_{0}\in \Omega\cap \mathbb{R}$.
Then there exists a unique quaternionic intrinsic slice regular function $f: \ \Omega \to \mathbb B$ which is bijective and such that $f(x_0)=0$, $f' (x_0)>0$.
\end{Cy}
\begin{proof}
Let us consider $\Omega_I=\Omega\cap\mathbb C_I$ where $I\in\mathbb S$. Then $\Omega_I$ is simply connected by hypothesis and symmetric with respect to the real line since $\Omega$ is axially symmetric. Let us set $\mathbb D_I=\mathbb B\cap\mathbb C_I$. By Corollary \ref{Riemann intrinsic} there exists a bijective, analytic intrinsic map $f_I: \Omega_I\to \mathbb D_I$ such that $f(x_0)=0$, $f'(x_0)>0$. By Remark \ref{extension}, $f_I$ extends to $f:\ \Omega\to \mathbb B$ and $f\in\mathcal N(\Omega)$. Note that for every $J\in\mathbb S$ we have $f_{|\mathbb C_J}: \Omega_J\to \mathbb D_J$ since $f$ takes each complex plane to itself.
\end{proof}
\begin{Rk}{\rm
The class $\mathfrak{R}(\mathbb H)$ contains all the possible simply connected open sets in $\mathbb H$ intersecting the real line for which a map $f: \Omega\to\mathbb B$ as in the Riemann mapping theorem belongs to the class $\mathcal{N}(\Omega)$. Indeed,  assume that a simply connected open set $\Omega\subset \mathbb H$ is mapped bijectively onto $\mathbb B$ with a map $f\in\mathcal{N}(\Omega)$, such that $f(x_0)=0$, $f'(x_0)>0$, $x_0\in\mathbb R$. Since $f\in\mathcal{N}(\Omega)$,  we have that  $f_{|\mathbb C_I}:\ \Omega\cap \mathbb C_I\to \mathbb B\cap\mathbb C_I=\mathbb D_I$  and so $f$ takes $\Omega\cap\mathbb R$ to $\mathbb B\cap\mathbb R$.  By its uniqueness, $f_{|\mathbb C_I}$  is the map prescribed by the complex Riemann mapping theorem, moreover $f_{|\mathbb C_I}$ takes $\Omega_I\cap\mathbb R$ bijectively to $\mathbb D_I\cap\mathbb R$ so $f^{-1}$ is totally real. By Corollary \ref{Riemann intrinsic}, it follows that $f$ is complex intrinsic, thus
$\Omega\cap \mathbb C_I$ is symmetric with respect to the real line.
 Since $I\in\mathbb S$ is arbitrary, $\Omega$ must be also axially symmetric, so it belongs to $\mathfrak{R}(\mathbb H)$.
}
\end{Rk}

\section{Approximation in some particular compacts}

For some particular cases of domains, the approximation by polynomials
can easily be proved, as for example in the following result.
\begin{theorem}\label{starlike}
Let $\Sigma\subset \mathbb{H}$ be a bounded region which is starlike with respect to the origin, and such that $\overline{\Sigma}$ is axially symmetric,
$\overline{\mathbb{H}}\setminus \overline{\Sigma}$ is connected and $\overline{\mathbb{C}_{I}}\setminus (\overline{\Sigma}\bigcap \mathbb{C}_{I})$ is connected for all $I\in \mathbb{S}$. If a function $f$ is slice regular in $\Sigma$ and continuous in $\overline\Sigma$ then the function $f$
can be uniformly approximated in $\overline{\Sigma}$ by polynomials.
\end{theorem}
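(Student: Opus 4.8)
The plan is to exploit the classical trick for starlike domains: since $\Sigma$ is starlike with respect to the origin, the dilated functions $f_r(q) := f(rq)$ for $r<1$ are slice regular on a neighborhood of $\overline{\Sigma}$, and they converge uniformly to $f$ on $\overline{\Sigma}$ as $r\to 1^-$. Then each $f_r$ can be approximated by polynomials using Theorem~\ref{rungepoly}. A diagonal choice of $r$ close to $1$ and of a polynomial close to $f_r$ yields the result.

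In detail, first I would fix $r\in(0,1)$ and observe that $\frac{1}{r}\overline{\Sigma} = \{q/r : q\in\overline{\Sigma}\}$. Because $\Sigma$ is starlike with respect to $0$, for any $q\in\overline\Sigma$ the segment $[0,q]$ lies in $\overline\Sigma$, and in particular $rq$ lies in the interior $\Sigma$ whenever $q\in\overline\Sigma$ and $r<1$ (this uses that starlikeness with respect to an interior point pushes the closure into the open set under contraction; one should check the boundary case carefully, but since $\Sigma$ is open and starlike, $r\overline{\Sigma}\subseteq \Sigma$ for $r<1$). Hence $f_r(q)=f(rq)$ is well defined for $q$ in a neighborhood of $\overline\Sigma$, namely on $\frac1r\Sigma\supset\overline\Sigma$. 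Moreover $f_r$ is slice regular there: it is the composition $f\circ(g_r)$ where $g_r(q)=rq$, and $g_r\in\mathcal N$ (it is a polynomial with real coefficient $r$), so by the Remark following the definition of $\mathcal N(U)$, the composition $f\circ g_r$ is slice regular on $\frac1r\Sigma$.

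Next, since $\overline\Sigma$ is axially symmetric with $\overline{\mathbb H}\setminus\overline\Sigma$ connected and $\overline{\mathbb C_I}\setminus(\overline\Sigma\cap\mathbb C_I)$ connected for all $I\in\mathbb S$, and $f_r$ is slice regular on the open set $\frac1r\Sigma\supset\overline\Sigma$, Theorem~\ref{rungepoly} applies with $K=\overline\Sigma$: there is a sequence of polynomials $P_n^{(r)}$ with $P_n^{(r)}\to f_r$ uniformly on $\overline\Sigma$. Finally I would handle the limit $r\to 1^-$: given $\eps>0$, uniform continuity of $f$ on the compact $\overline\Sigma$ gives $r<1$ with $\|f(rq)-f(q)\|<\eps/2$ for all $q\in\overline\Sigma$ (here one uses that $rq\to q$ uniformly and that $f$ is uniformly continuous on $\overline\Sigma$, with $rq\in\overline\Sigma$ by starlikeness of the closure, which follows from that of $\Sigma$), then pick $n$ with $\sup_{\overline\Sigma}\|P_n^{(r)}-f_r\|<\eps/2$, so $\sup_{\overline\Sigma}\|P_n^{(r)}-f\|<\eps$.

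The main obstacle, modest as it is, is the justification that the dilation behaves well: one must confirm that $r\overline{\Sigma}\subseteq\Sigma$ for $0<r<1$ (so that $f_r$ is genuinely slice regular on a neighborhood of $\overline\Sigma$, not merely on $\overline\Sigma$), and that $\overline\Sigma$ remains starlike so that $rq\in\overline\Sigma$ and uniform continuity of $f$ on $\overline\Sigma$ can be invoked. A second point requiring a line of care is that $g_r(q)=rq$ indeed belongs to $\mathcal N$ and maps $\frac1r\Sigma$ into $\Sigma$, so that the composition result for quaternionic intrinsic functions applies and $f_r\in\mathcal R(\frac1r\Sigma)$; without this, $f_r$ would not be known to be slice regular. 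Everything else is a routine $\eps$-argument combining uniform continuity with Theorem~\ref{rungepoly}.
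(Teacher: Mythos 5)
Your proof is correct and follows essentially the same route as the paper's: the paper uses the dilations $\varphi_n(q)=nq/(n+1)\in\mathcal N(\Sigma)$ in place of your $g_r(q)=rq$, composes with $f$, invokes uniform continuity of $f$ on $\overline\Sigma$, and then applies Theorem \ref{rungepoly} to the dilated function. The boundary issue you flag (that $r\overline{\Sigma}\subseteq\Sigma$ for $r<1$, so that the dilated function is slice regular on a neighborhood of $\overline\Sigma$) is passed over with the same level of implicitness in the paper's own argument, so your treatment matches it.
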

\begin{proof}
Let us assume that $\Sigma$ is starlike with respect to the origin. Then the function $\varphi_n(q)=\frac{nq}{n+1}\in\mathcal N(\Sigma)$ for all $n\in\mathbb N$ and the composition $f(\varphi_n (q))=f(\frac{nq}{n+1})$ is slice regular in $\Sigma_n=\varphi_n^{-1}(\Sigma)$. The function $f(\frac{nq}{n+1})$ is defined in $\overline{\Sigma}$ and by the uniform continuity of $f$ in $\overline{\Sigma}$ we have that for any $\varepsilon >0$ and for a suitable $n$
$$
\left\|f\left(\frac{nq}{n+1}\right) -f(q)\right\| <\varepsilon /2
$$
for $q\in\overline{\Sigma}$. Then, since $f(\frac{nq}{n+1})$ is slice regular in $\overline{\Sigma}$, as a consequence of Runge's result in Theorem \ref{rungepoly}, there exists a polynomial $P(q)$ such that
$$
\left\|f\left(\frac{nq}{n+1}\right) -P(q)\right\| <\varepsilon /2
$$
for $q\in\overline{\Sigma}$, thus $\left\|f(q) -P(q)\right\| <\varepsilon$ and the statement follows.
\end{proof}

\begin{Rk}{\rm
 We now list some examples in the complex plane in e.g. \cite{Gal2}, pp. 82-83, Applications 1.9.8 and 1.9.9, a)-c), that we will use to present some concrete examples of compact sets in $\mathbb{H}$ where it is possible the approximation by polynomials.

{\bf Example 1.} Let $G\subset \mathbb{C}$ be bounded by the $m$-cusped hypocycloid $H_{m}$, $m=3, 4, ..., $ given by the parametric equation
$$z=e^{i \theta}+\frac{1}{m-1}e^{-(m-1)i \theta}, \, \theta\in [0, 2 \pi).$$
It is known that the conformal mapping (bijection) $\Psi : \overline{\mathbb{C}}\setminus \overline{\mathbb{D}}\to \overline{\mathbb{C}}\setminus G$ satisfying $\Psi(\infty)=\infty$ and $\Psi^{\prime}(\infty)>0$, is given by $\Psi(w)=w+\frac{1}{(m-1)w^{m-1}}$.

Note that the "original" Riemann mapping in the statement of Theorem 3.1 can be expressed with the aid of this mapping $\Psi$.

{\bf Example 2.} $G\subset \mathbb{C}$ is the $m$-leafed symmetric lemniscate, $m=2, 3, ..., $ with its boundary given by
$$L_{m}=\{z \in \mathbb{C} ; |z^{m}-1|=1\},$$
and the corresponding conformal mapping is given by $\Psi(w)=w \left (1+\frac{1}{w^{m}}\right )^{1/m}$.

{\bf Example 3.} $G\subset \mathbb{C}$ is the semidisk
$$SD=\{z\in \mathbb{C} ; |z|\le 1 \mbox{ and } |Arg(z)|\le \pi/2\},$$
with the corresponding conformal mapping given by $\Psi(w)=\frac{2(w^{3}-1)+3(w^{2}-w)+2(w^{2}+w+1)^{3/2}}{w(w+1)\sqrt{3}}$.

Given the set $G$ as in one of the above examples, we consider its axially symmetric completion $\Omega_G=\bigcup_{x+iy\in G,\ I\in\mathbb S} (x+Iy)$.
Note that $\overline\Omega_G$ is a starlike set with respect to the origin: for any $q=x+Iy\in\Omega_G$ consider $z=x+iy\in G$. Since $\bar G$ is starlike with respect to the origin in the complex plane $\mathbb C$,
the segment joining $z$ with the origin belongs to $\bar G$ and thus the segment joining $q$ with the origin is contained in $\overline\Omega_G$.
}
\end{Rk}

\begin{Cy}\label{cor_starlike}
Let $\Sigma$ be a bounded Jordan region which is starlike with respect to a real point, such that $\overline{\Sigma}$ is axially symmetric,
$\overline{\mathbb{H}}\setminus \overline{\Sigma}$ is connected and $\overline{\mathbb{C}_{I}}\setminus (\overline{\Sigma}\bigcap \mathbb{C}_{I})$ is connected for all $I\in \mathbb{S}$. Let $\alpha\in\overline{\Sigma}$. If a function $f$ is slice regular in $\Sigma$ and continuous in $\bar \Sigma$ then $f$
can be uniformly approximated in $\overline\Sigma$ by  polynomials which take the value $f(\alpha)$ at the point $\alpha$.
\end{Cy}
\begin{proof}
Let $\varepsilon >0$. By Theorem \ref{starlike} we know that there exists a polynomial $P(q)$ such that
$\left\|f(q) -P(q)\right\| <\varepsilon/2$ for $q\in\overline{\Sigma}$, so $\left\|f(\alpha) -P(\alpha)\right\| <\varepsilon/2$. By these two inequalities we have
$$
\left\|f(q) -(P(q)+f(\alpha) -P(\alpha))\right\| <\varepsilon
$$
for $q\in \overline{\Sigma}$ and thus the polynomial $P(q)+f(\alpha) -P(\alpha)$ satisfies the conditions in the statement.
\end{proof}
In what follows we obtain approximation results in the more general case of sets in $\mathfrak R(\mathbb H)$.

For that purpose, firstly we need a generalization of Theorem 4, p. 32 in \cite{walsh}.
\begin{theorem}\label{thm 4}
Let $T\in \mathfrak R(\mathbb H)$ be such that $T\cap \mathbb C_I$ is a bounded Jordan region for all $I\in \mathbb{S}$. Let $\{T_n\}$ be a sequence in $\mathfrak R(\mathbb H)$ such that $\overline{T}\subset T_n$, $\overline{T}_{n+1}\subset T_n$, for all $n=1,2,\ldots$ and no point exterior to $T$ belongs to $T_n$ for all $n$.
Let us assume that $q=0$ belongs to $T$ and that the bijective, quaternionic intrinsic functions $\Phi:\ T\to\mathbb B$, $\Phi_n:\ T_n\to\mathbb B$ all map $q=0$ to $w=0$ and $\Phi'(0)>0$,
$\Phi'_n(0)>0$ for all $n=1,2,\ldots$. Then:
\begin{equation}\label{Tuniform}
\lim_{n\to\infty} \Phi_n(q)=\Phi(q)
\end{equation}
uniformly for $ q\in\overline{T}$.
\end{theorem}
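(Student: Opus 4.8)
The plan is to reduce the quaternionic statement to its complex counterpart, Theorem 4 on p.~32 of \cite{walsh}, by slicing with the complex planes $\mathbb C_I$ and then reassembling via the Representation Formula. The key observation is that, because $\Phi\in\mathcal N(T)$ and $\Phi_n\in\mathcal N(T_n)$, each of these maps sends $T\cap\mathbb C_I$ (resp.\ $T_n\cap\mathbb C_I$) bijectively onto $\mathbb B\cap\mathbb C_I=\mathbb D_I$, and the restrictions $\Phi_I:=\Phi_{|\mathbb C_I}$, $\Phi_{n,I}:=(\Phi_n)_{|\mathbb C_I}$ are exactly the complex Riemann maps of these Jordan domains normalized by $\Phi_I(0)=0$, $\Phi_I'(0)>0$ (and likewise for $\Phi_{n,I}$), as guaranteed by Corollary \ref{Riemann intrinsic} and the discussion in the last Remark of Section 3. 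Thus, first I would fix an arbitrary $I\in\mathbb S$, identify $\mathbb C_I$ with $\mathbb C$, and check that the hypotheses of the classical theorem hold for the domains $T_I:=T\cap\mathbb C_I$ and $T_{n,I}:=T_n\cap\mathbb C_I$: $T_I$ is a bounded Jordan region (given), $\overline{T_I}\subset T_{n,I}$, $\overline{T_{n+1,I}}\subset T_{n,I}$, and no point exterior to $T_I$ lies in all the $T_{n,I}$ (these follow by intersecting the corresponding inclusions in $\mathbb H$ with $\mathbb C_I$, using that all the sets involved are axially symmetric so that intersection with $\mathbb C_I$ commutes with the set operations and with taking exteriors within $\mathbb C_I$). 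The classical theorem then yields $\Phi_{n,I}\to\Phi_I$ uniformly on $\overline{T_I}$.

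The second step is to upgrade this ``uniform on each slice'' convergence to ``uniform on $\overline T$''. Here I would invoke the Representation Formula (Theorem \ref{Repr_formula}, and its extension to continuous slice functions, Remark \ref{reprform_continuous}): for any $q=x+Iy\in\overline T$, writing $z=x+Jy$ for a fixed reference $J\in\mathbb S$,
\[
\Phi_n(q)-\Phi(q)=\tfrac12(1-IJ)\bigl[\Phi_{n,J}(z)-\Phi_J(z)\bigr]+\tfrac12(1+IJ)\bigl[\Phi_{n,J}(\bar z)-\Phi_J(\bar z)\bigr],
\]
and since $\|1\pm IJ\|\le 2$ we get $\|\Phi_n(q)-\Phi(q)\|\le 2\sup_{w\in\overline{T_J}}\|\Phi_{n,J}(w)-\Phi_J(w)\|$. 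Because $\overline T$ is axially symmetric, every $q\in\overline T$ has the form $x+Iy$ with $x+Jy\in\overline{T_J}$, so the right-hand side is a single numerical sequence (depending only on $n$, not on $q$) tending to $0$ by Step 1. This gives (\ref{Tuniform}).

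The main obstacle, and the point that deserves the most care, is the verification in Step 1 that the complex slices $T_{n,I}$ genuinely satisfy the monotonicity and Jordan-domain hypotheses of Walsh's theorem uniformly in $I$ — in particular that $\overline{T_I}\subset T_{n,I}$ really follows from $\overline T\subset T_n$ (one must note $\overline{T\cap\mathbb C_I}=\overline T\cap\mathbb C_I$ because $T$ is open and its closure is axially symmetric, so that the slice of the closure is the closure of the slice), and that ``no point exterior to $T$ belongs to all $T_n$'' transfers to the slice statement ``no point of $\mathbb C_I$ exterior to $T_I$ belongs to all $T_{n,I}$''. A secondary subtlety is that one should confirm the extended maps $\Phi$, $\Phi_n$ are continuous up to $\overline T$, $\overline T_n$ respectively so that evaluating them on the boundary in the Representation Formula is legitimate; this follows since each is the $\mathrm{ext}$ of a complex map that extends continuously to the closure of a Jordan domain (Carathéodory), and $\mathrm{ext}$ preserves continuity. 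Once these bookkeeping points are settled, the proof is essentially the two displayed estimates above.
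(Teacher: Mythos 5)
Your proposal is correct and follows essentially the same route as the paper's proof: restrict $\Phi$ and $\Phi_n$ to a complex slice $\mathbb C_I$ (where they are the classical Riemann maps by quaternionic intrinsicness), apply Walsh's Theorem 4 there, and then propagate the uniform convergence to all of $\overline T$ via the Representation Formula with the bound $\|\tfrac12(1\pm IJ)\|\le 1$. Your additional remarks on verifying the slice hypotheses and on Carath\'eodory continuity up to the boundary are sensible bookkeeping that the paper leaves implicit, but they do not change the argument.
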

\begin{proof}
Let $I\in\mathbb S$ and let us consider $T\cap\mathbb C_I$, $T_n\cap\mathbb C_I$ and the restrictions $\Phi_{|\mathbb C_I}$, $\Phi_{n|\mathbb C_I}$ of $\Phi$, $\Phi_n$ respectively, to $\mathbb C_I$. Since $\Phi$, $\Phi_n$ are quaternionic intrinsic we have $\Phi_{|\mathbb C_I}:\ T\cap\mathbb C_I\to\mathbb D_I$, $\Phi_{n|\mathbb C_I}:\ T_n\cap\mathbb C_I\to\mathbb D_I$ and since all the hypothesis of Theorem 4 in \cite{walsh} are satisfied, we have that
\begin{equation}\label{uniform1}
\lim_{n\to\infty} \Phi_{n|\mathbb C_I}(z)=\Phi_{|\mathbb C_I}(z)
\end{equation}
uniformly for all $z\in \overline T\cap\mathbb C_I$. Let $K$ be a compact set in $\overline T$, then for any $q=x+Jy\in\ K$ we have by the Representation Formula (\ref{Repr_formula})
$$\|\Phi_n(q)-\Phi(q)\|$$
$$=\left\| \frac{1}{2}(1-JI) (\Phi_n(x+Iy)-\Phi(x+Iy))+\frac{1}{2}(1+JI) (\Phi_n(x-Iy)-\Phi(x-Iy))\right\|$$
$$
\leq \| \Phi_n(x+Iy)-\Phi(x+Iy))\|+\|\Phi_n(x-Iy)-\Phi(x-Iy)\|.
$$
Now note that the right hand side corresponds to the restriction of $\Phi_n$, $\Phi$ to $K\cap\mathbb C_I$ which is a compact subset of $\overline T\cap \mathbb C_I$ and thus, by (\ref{uniform1}) we have
$$
 \| \Phi_n(x+Iy)-\Phi(x+Iy))\|+\|\Phi_n(x-Iy)-\Phi(x-Iy)\|<\varepsilon,
 $$
for $n>N(\varepsilon)$, which concludes the proof.
\end{proof}
\begin{Cy}\label{corthem4}
Under the hypotheses of Theorem \ref{thm 4}, let $\chi_n:\ T_n\to T$ be defined by $\chi_n(q)=\Phi^{-1}(\Phi_n(q))$. Then $\chi_n(0)=0$, $\chi'_n(0)>0$ and
$$
\lim_{n\to\infty}\chi_n(q)=q
$$
uniformly in $\overline T$.
\end{Cy}
\begin{proof}
The function $\Phi^{-1}$ is continuous in $\overline T$ thus we can apply it to both members of (\ref{Tuniform}) and we get the statement.
\end{proof}
The following result further generalizes Theorem \ref{starlike}:
\begin{theorem}\label{thm 5} Let $T\in\mathfrak R(\mathbb H)$ be bounded and such that $T\cap\mathbb C_I$ is a Jordan region in the plane $\mathbb C_I$, for all $I\in \mathbb{S}$.
If $f$ is slice regular in $T$, continuous in $\overline T$, then in $\overline T$ the function $f(q)$ can be uniformly approximated by polynomials in $q$.
\end{theorem}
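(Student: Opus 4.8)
The plan is to mimic the classical proof of Mergelyan-type theorems on Jordan domains: transport $f$ by Riemann maps onto a shrinking family of slightly larger domains where Runge's theorem applies, and then pass to the limit. All the slice-regular machinery needed for this is already in place (Theorem \ref{thm 4}, Corollary \ref{corthem4}, Theorem \ref{rungepoly}).

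First I would reduce to the case $0\in T$. Since $T\cap\mathbb C_I$ is a nonempty connected set symmetric with respect to the real axis of $\mathbb C_I$ (by axial symmetry of $T$), it meets $\mathbb R$; fixing $x_0\in T\cap\mathbb R$ and replacing $T$ by $T-x_0$ and $f$ by $q\mapsto f(q+x_0)$ --- a real translation is quaternionic intrinsic and carries polynomials in $q$ to polynomials in $q$ --- I may assume $0\in T$. Next I would construct a sequence $\{T_n\}\subset\mathfrak R(\mathbb H)$ as required by Theorem \ref{thm 4}. Working in a fixed plane $\mathbb C_J$, I would take the conformal map of $\overline{\mathbb C}_J\setminus\overline{T\cap\mathbb C_J}$ onto the exterior of the closed unit disk fixing $\infty$; since $T\cap\mathbb C_J$ is symmetric with respect to the real axis, this map can be chosen intrinsic, so the pre-images of the circles $\{|w|=1+1/n\}$ are Jordan curves symmetric with respect to $\mathbb R$ whose interior Jordan regions $D_n$ satisfy $\overline{T\cap\mathbb C_J}\subset D_n$, $\overline{D}_{n+1}\subset D_n$, and $\bigcap_n D_n=\overline{T\cap\mathbb C_J}$. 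Letting $T_n$ be the axially symmetric completion of $D_n$, each $T_n$ is open, lies in $\mathfrak R(\mathbb H)$ (its slices are copies of the simply connected $D_n$), contains $0$, and satisfies $\overline T\subset T_n$, $\overline T_{n+1}\subset T_n$, with $\bigcap_n T_n=\overline T$, so no point exterior to $T$ lies in all the $T_n$.

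Then, by the Riemann mapping theorem for $\mathfrak R(\mathbb H)$ established above, there are unique bijective quaternionic intrinsic slice regular maps $\Phi:T\to\mathbb B$ and $\Phi_n:T_n\to\mathbb B$ fixing $0$ with positive slice derivative there, and Theorem \ref{thm 4} together with Corollary \ref{corthem4} provides $\chi_n=\Phi^{-1}\circ\Phi_n:T_n\to T$, quaternionic intrinsic, with $\chi_n(q)\to q$ uniformly on $\overline T$. Setting $g_n:=f\circ\chi_n$, the composition rule for quaternionic intrinsic functions (since $\chi_n\in\mathcal N(T_n)$, $f\in\mathcal R(T)$ and $\chi_n(T_n)=T$) shows $g_n\in\mathcal R(T_n)$, so $g_n$ is slice regular on the open set $T_n\supset\overline T$. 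For $q\in\overline T$ one has $\chi_n(q)\in T\subset\overline T$, so the uniform continuity of $f$ on $\overline T$ and the uniform convergence $\chi_n\to\mathrm{id}$ give, for any $\varepsilon>0$, that $\|g_n(q)-f(q)\|<\varepsilon/2$ on $\overline T$ once $n$ is large. Finally I would apply Theorem \ref{rungepoly} to such a $g_n$ with $K=\overline T$: $K$ is axially symmetric compact, $K\cap\mathbb C_I=\overline{T\cap\mathbb C_I}$ is the closure of a bounded Jordan region, hence $\overline{\mathbb C}_I\setminus(K\cap\mathbb C_I)$ is connected, and from these facts $\overline{\mathbb H}\setminus K$ is connected as well (any point of the complement can be rotated, within the complement, into a fixed plane $\mathbb C_I$ by axial symmetry, where it is joined to others inside the connected set $\overline{\mathbb C}_I\setminus(K\cap\mathbb C_I)$). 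Theorem \ref{rungepoly} then yields a polynomial $P$ in $q$ with $\|g_n(q)-P(q)\|<\varepsilon/2$ on $\overline T$, whence $\|f(q)-P(q)\|<\varepsilon$ on $\overline T$.

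The main obstacle I anticipate is the construction and justification of the exhausting family $\{T_n\}$: one must verify that the slice-wise exterior conformal maps can be chosen compatibly with the real-axis symmetry, that the axially symmetric completions of the Jordan regions $D_n$ genuinely belong to $\mathfrak R(\mathbb H)$ and decrease to $\overline T$, and that $\overline T$ itself fulfils the connectedness hypotheses of Runge's theorem. Once these topological points are settled, the transport-and-limit scheme above is routine in view of Theorems \ref{thm 4} and \ref{rungepoly} and Corollary \ref{corthem4}.
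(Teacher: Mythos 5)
Your proposal is correct and follows essentially the same route as the paper's proof: form $\chi_n=\Phi^{-1}\circ\Phi_n$ via Theorem \ref{thm 4} and Corollary \ref{corthem4}, approximate $f$ by the slice regular functions $f\circ\chi_n$ using uniform continuity, and finish with Theorem \ref{rungepoly}. The only difference is that you explicitly construct the exhausting sequence $\{T_n\}$ and verify the connectedness hypotheses of Runge's theorem for $\overline T$, details the paper leaves implicit by simply invoking ``a sequence $\{T_n\}$ as in the statement of Theorem \ref{thm 4}.''
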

\begin{proof}
Consider a sequence $\{T_n\}$ and the maps $\Phi_n$, $\Phi$ as in the statement of Theorem \ref{thm 4}. Let $\chi_n:\ T_n\to T$ be defined by $\chi_n(q)=\Phi^{-1}(\Phi_n(q))$. Then $\chi_n\in\mathcal N(T_n)$ thus we can consider the composition $f(\chi_n(q))$ which is slice regular in $\overline T$.
By Corollary \ref{corthem4} and the continuity of $f$, for any $\varepsilon >0$ there exists $N(\varepsilon)$ such that
$$
\| f(q)-f(\chi_n(q))\|<\varepsilon/2
$$
for $q\in\overline T$. By Theorem \ref{rungepoly} there exists a polynomial $P(q)$ such that
$$
\| f(\chi_n(q))-P(q)\|<\varepsilon/2
$$
for $q\in\overline T$, thus we deduce that
$$
\| f(q)- P(q)\|<\varepsilon
$$
for $q\in\overline T$ and this concludes the proof.
\end{proof}
The proof of the following corollary follows exactly the proof of Corollary \ref{cor_starlike}, so we will omit it.
\begin{Cy}
Let $T\in\mathfrak R(\mathbb H)$ be bounded and such that $T\cap\mathbb C_I$ is a Jordan region in the plane $\mathbb C_I$.
 Let $\alpha\in\overline T$. If a function $f$ is slice regular in $T$ and continuous in $\overline T$ then $f$
can be uniformly approximated in $\overline T$ by polynomials which take the value $f(\alpha)$ at the point $\alpha$.
\end{Cy}

In what follows, we want to discuss approximation results by polynomials in two cases: for compact balls with centers at real numbers and for the so called
Cassini cells. The proofs of these results are interesting from two points of views:
\begin{enumerate}
\item[a)] they are completely constructive being based on convolutions with some well-known trigonometric kernel;
 \item[b)] they allow, in addition, to obtain quantitative estimate in terms of the modulus of continuity (fact which does not happen in the previous approximation results).
\end{enumerate}
In order to state and prove the results, we need to recall some preliminary facts.\\
As in \cite{GaSa}, consider the classical de la Vall\'ee Poussin kernel given by
$$K_{n}(u)=\frac{(n!)^{2}}{(2n)!}\left (2\cos\frac{u}{2}\right )^{2n}=1+2\sum_{j=1}^{n}\frac{(n!)^{2}}{(n-j)!(n+j)!}\cos(j u), u\in \mathbb{R}$$
and for $f:\overline{B(0;R)}\to \mathbb{H}$, left slice regular on $B(0;R)$, i.e. $f(q)=\sum_{k=0}^{\infty}q^{k}c_{k}$, $q\in B(0;R)$, and $f$ continuous on $\overline{B(0;R)}$, define the convolution operator of quaternion variable
$$T_{n, l}(f)(q)=\frac{1}{2\pi}\int_{-\pi}^{\pi}f(q e^{I_{q} u}) K_{n}(u)d u, \, q\in \mathbb{H}\setminus \mathbb{R}, \quad q=re^{I_{q} t}\in \overline{B(0;R)}$$
$$T_{n, l}(f)(q)=\frac{1}{2\pi}\int_{-\pi}^{\pi}f(q e^{I u}) K_{n}(u)d u, \, q\in \mathbb{R}\setminus \{0\}, \qquad q=\|q\|e^{I t}\in \overline{B(0;R)},  t=0\ {\rm or}\ \pi,$$
where $I$ is an arbitrary element in $\mathbb S$, and
$$T_{n, l}(f)(0)=\frac{1}{2\pi}f(0)\int_{-\pi}^{\pi}K_{n}(u)d u=f(0).$$
By formula (3.1) in \cite{GaSa} we can write
\begin{equation}\label{polyn_ball}
T_{n, l}(f)(q):=P_{n}(q)=\sum_{l=0}^{n}q^{l}c_{l}\cdot \frac{(n!)^{2}}{(n-l)!(n+l)!},\quad q\in {B}(0,{R}).
\end{equation}
\begin{theorem}({\bf Quantitative approximation on compact balls})\label{appr_balls}
Let $R>0$,  $B(0;R)=\{q\in \mathbb{H} ; \|q\| < R\}$, $K=\overline{B(0;R)}$  and $f:K\to \mathbb{H}$ be continuous on $K$ and slice regular on the interior of $K$, $int(K)=B(0;R)$.
Then for any $\varepsilon >0$ there exists a polynomial $P$ such that
$\|f(q)-P(q)\|<\varepsilon$ for all $q\in K$.

In fact, for all  $q\in \overline{B(0;R)}$ and $n\in \mathbb{N}$ we have
$$|P_{n}(q)-f(q)|\le 3(R+1)\omega_{1}(f; 1/\sqrt{n}),$$
where $\omega_{1}(f; \delta)=\sup\{\|f(u)-f(v)\|; u, v\in \overline{B(0;R)}, \|u-v\|\le \delta\}$ and $P_{n}(q)$ are the polynomials given by (\ref{polyn_ball}).
\end{theorem}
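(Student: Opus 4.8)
The plan is to exploit the explicit formula (\ref{polyn_ball}) for the approximating polynomials $P_n = T_{n,l}(f)$ and to estimate the convolution remainder directly, following the template of the de la Vallée-Poussin estimates in \cite{GaSa}. First I would reduce everything to an integral representation: for $q \in \overline{B(0;R)}$, writing $q = r e^{I_q t}$ (and using the definitions of $T_{n,l}(f)$ on $\mathbb R$ and at $0$ for the degenerate cases), the normalization $\frac{1}{2\pi}\int_{-\pi}^{\pi} K_n(u)\,du = 1$ lets me write
$$
P_n(q) - f(q) = T_{n,l}(f)(q) - f(q) = \frac{1}{2\pi}\int_{-\pi}^{\pi}\big(f(q e^{I_q u}) - f(q)\big) K_n(u)\, du,
$$
so that
$$
\|P_n(q) - f(q)\| \le \frac{1}{2\pi}\int_{-\pi}^{\pi}\|f(q e^{I_q u}) - f(q)\|\, K_n(u)\, du.
$$
Here I should check that $f(q e^{I_q u})$ makes sense, i.e. that $q e^{I_q u} \in \overline{B(0;R)}$: this holds because $\|q e^{I_q u}\| = \|q\| \le R$ (the exponential of a purely imaginary quaternion is a unit quaternion), and $q e^{I_q u}$ stays in the plane $\mathbb C_{I_q}$, so $f$ is evaluated where it is defined and continuous.

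Next I would bound the integrand by the modulus of continuity. Since $\|q e^{I_q u} - q\| = \|q\|\cdot\|e^{I_q u} - 1\| \le R\,|e^{I_q u}-1|$ and $|e^{I_q u} - 1| = 2|\sin(u/2)| \le |u|$, I get $\|f(q e^{I_q u}) - f(q)\| \le \omega_1(f; R|u|)$. Using the standard sub-additivity-type inequality $\omega_1(f;\lambda\delta) \le (1+\lambda)\omega_1(f;\delta)$ I would peel off the factor $R$; more efficiently, I would split the integral over $|u|\le 1/\sqrt n$ and $|u|>1/\sqrt n$, on the first piece bounding $\omega_1(f;R|u|)\le \omega_1(f;R/\sqrt n) \le (R+1)\omega_1(f;1/\sqrt n)$, and on the second piece using $\omega_1(f;R|u|) \le (1 + R|u|\sqrt n)\,\omega_1(f;1/\sqrt n) \le (1+R)(1+|u|\sqrt n)\omega_1(f;1/\sqrt n)$ together with the classical second-moment estimate for the de la Vallée-Poussin kernel, namely $\frac{1}{2\pi}\int_{-\pi}^{\pi} u^2 K_n(u)\, du = O(1/n)$ (equivalently $\frac{1}{2\pi}\int u^2 K_n(u)du \le c/n$ for an absolute constant), and a Cauchy–Schwarz step to handle the $|u|\sqrt n$ term. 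Collecting the pieces yields a bound of the form $C(R+1)\omega_1(f;1/\sqrt n)$, and a careful tracking of the constants (as done in \cite{GaSa}) gives $C = 3$, which is the claimed inequality $\|P_n(q)-f(q)\|\le 3(R+1)\omega_1(f;1/\sqrt n)$.

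Finally, the qualitative statement is immediate from the quantitative one: since $f$ is continuous on the compact set $K = \overline{B(0;R)}$, it is uniformly continuous there, so $\omega_1(f;1/\sqrt n)\to 0$ as $n\to\infty$; hence for any $\varepsilon>0$ one chooses $n$ large enough that $3(R+1)\omega_1(f;1/\sqrt n)<\varepsilon$ and takes $P=P_n$, which is a genuine polynomial in $q$ with right coefficients by (\ref{polyn_ball}). The main obstacle I anticipate is not conceptual but bookkeeping: getting the constant down to exactly $3$ requires the sharp form of the kernel's second-moment bound and a disciplined splitting of the integral, and one must be careful that the estimate is genuinely uniform in $q\in\overline{B(0;R)}$ — in particular that the real-point and origin cases (where the formula for $T_{n,l}(f)$ is stated separately) are covered by the same bound, which they are since in those cases the integrand is again controlled by $\omega_1(f;R|u|)$ with $\|q\|\le R$. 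A secondary point to verify is that the convolution $T_{n,l}(f)$ indeed coincides with the polynomial in (\ref{polyn_ball}); this is quoted from formula (3.1) of \cite{GaSa} and I would simply cite it, noting that it follows by expanding $f(q)=\sum_k q^k c_k$, using $q e^{I_q u} = r e^{I_q(t+u)}$ so that $(q e^{I_q u})^k = q^k e^{I_q k u}$ termwise, and integrating against the cosine–series form of $K_n$.
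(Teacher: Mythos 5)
Your proposal is correct and follows essentially the same route as the paper: the same integral representation of $T_{n,l}(f)(q)-f(q)$ against the de la Vall\'ee-Poussin kernel, the same bound $\|f(qe^{I_q u})-f(q)\|\le\omega_1(f;R|u|)$, the subadditivity $\omega_1(f;\lambda\delta)\le(1+\lambda)\omega_1(f;\delta)$ together with the kernel moment estimate $\frac{1}{2\pi}\int_{-\pi}^{\pi}(|u|\sqrt n+1)K_n(u)\,du\le 3$ to get the constant $3(R+1)$, and uniform continuity for the qualitative conclusion. The one point the paper treats more explicitly than you do is that the identity $T_{n,l}(f)(q)=P_n(q)$ in (\ref{polyn_ball}) is only available on the open ball $B(0;R)$ (where the power series of $f$ converges), so the stated inequality for $P_n$ at boundary points $q_0\in\partial B(0;R)$ is obtained by a final limiting argument $q_m\to q_0$ with $q_m\in B(0;R)$, using the continuity of $P_n$ and $f$.
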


\begin{proof} For any $q\in \overline{B(0;R)}$ and reasoning exactly as in the complex case in \cite{Ga}, p. 427, we easily get
$$\|T_{n, l}(f)(q)-f(q)\|\le \frac{1}{2\pi}\int_{-\pi}^{\pi}\|f(q e^{I_{q} u}) - f(q)\|K_{n}(u)d u$$
$$\le \frac{1}{2\pi}\int_{-\pi}^{\pi}\omega_{1}(f ; \|q\|\cdot \|e^{I_{q} u}-1\|)K_{n}(u)d u\le \frac{1}{2\pi}\int_{-\pi}^{\pi}\omega_{1}(f ; R |u|)K_{n}(u)d u$$
$$\le (R+1)\frac{1}{2\pi}\int_{-\pi}^{\pi}\omega_{1}\left (f ; \frac{1}{\sqrt{n}} |u|\sqrt{n}\right )K_{n}(u)d u$$
$$\le (R+1)\omega_{1}(f ; 1/\sqrt{n})\frac{1}{2\pi}\int_{-\pi}^{\pi}(|u|\sqrt{n}+1)K_{n}(u) dt\le 3(R+1)\omega_{1}(f ; 1/\sqrt{n}),$$
where $\omega_{1}(f; \delta)=\sup\{\|f(u)-f(v)\|; u, v\in \overline{B(0;R)}, \|u-v\|\le \delta\}$.

Let $\varepsilon >0$ be arbitrary. By the continuity of $f$ on $\overline{B(0;R)}$ we have $\lim_{n\to \infty}\omega_{1}(f ;1/\sqrt{n})=0$,
which means that there exists $n_{0}$ such that for all $n\ge n_{0}$ we have
$$\|T_{n, l}(f)(q)-f(q)\|\le \varepsilon, \mbox{ for all } q\in \overline{B(0;R)}.$$
But by the relationship (\ref{polyn_ball}), if $q\in B(0;R)$ then $T_{n, l}(f)(q)$ is a right polynomial, $P_{n}(q)$, i.e. we have
\begin{equation}\label{eq:approx}
\|P_{n}(q)-f(q)\|\le \varepsilon, \mbox{ for all } q\in B(0;R).
\end{equation}
Now, let $q_{0}\in \overline{B(0;R)}$. There exists a sequence $q_{m}\in B(0;R)$, $m\in \mathbb{N}$, such that $\|q_{m}-q_{0}\|\to 0$ as
$m\to \infty$. Replacing in (\ref{eq:approx}) $q$ by $q_{m}$ and then passing to limit with $m\to \infty$, by the continuity of $P_{n}$ and $f$ it follows
that $\|P_{n}(q_{0})-f(q_{0})\|\le \varepsilon$, which proves the theorem.
\end{proof}

\begin{Rk}{\rm
\begin{enumerate}
\item We have a little more than in the classical Mergelyan's result : we get a quantitative estimate
in terms of the modulus of continuity $\omega_{1}$.

 \item Theorem \ref{appr_balls} can be easily extended to functions defined on the balls $\overline{B(x_{0} ; R)}$, where $x_{0}\in \mathbb{R}$, by taking $Q_{n}(q)=P_{n}(q-x_{0})$, $q\in \overline{B(x_{0};R)}$, with $P_{n}(q)$ given in the proof of Theorem \ref{appr_balls}.

 \item If $x_{0}\in \mathbb{H}\setminus \mathbb{R}$, then the above Point 2 does not hold. However, by using a new development in series in \cite{Sto}, in what follows we will extend Theorem \ref{appr_balls} to more general compact sets than the closed balls with real numbers as centers.
\end{enumerate}
}
\end{Rk}
For $q_{0}=x_{0}+I y_{0}\in \mathbb{H}$, with $x_{0}, y_{0}\in \mathbb{R}$, $y_{0}>0$, $I\in \mathbb{S}$ and $R>0$, let us denote
$$B(x_{0}+y_{0}\mathbb{S} ; R)=\{q\in \mathbb{H} ; \|(q-x_{0})^{2}+y_{0}^{2}\| < R^{2}\}.$$
A set of the form $B(x_{0}+y_{0}\mathbb{S} ; R)$ will be called Cassini cell (it is defined by means of the so-called Cassini pseudo-metric, see \cite{gp}).  If $y_{0}=0$, then clearly we have $B(x_{0}+\mathbb{S}y_{0} ; R)=B(x_{0};R)$.

According to Theorem 5.4 in \cite{gp}, if $f:B(x_{0}+y_{0}\mathbb{S} ; R)\to \mathbb{H}$ is slice regular then it admits the representation
\begin{equation}\label{develop}
f(q)=\sum_{k=1}^{\infty}[(q-x_{0})^{2}+y_{0}^{2}]^{k}[c_{2k}+q c_{2k+1}], \, \mbox{ for all } q\in B(x_{0}+y_{0}\mathbb{S} ; R),
\end{equation}
where $c_{2k}, c_{2k+1}\in \mathbb{H}$, for all $k\in \mathbb{N}$.

Similar to the case of Theorem \ref{appr_balls}, for functions $f$ of the above form, let us define the convolution
$$V_{n, l}(f)(q)=\frac{1}{2\pi}\int_{-\pi}^{\pi}f(q e^{I_{q} u}) K_{n}(u)d u, q\in \mathbb{H}\setminus \mathbb{R}, \quad q=re^{I_{q} t}\in \overline{B(x_{0}+y_{0}\mathbb{S} ; R)}$$
$$V_{n, l}(f)(q)=\frac{1}{2\pi}\int_{-\pi}^{\pi}f(q e^{I u}) K_{n}(u)d u, \, q\in \mathbb{R}\setminus \{0\}, q=\|q\|e^{I t}\in \overline{B(x_{0}+y_{0}\mathbb{S};R)},  t=0\ {\rm or}\ \pi,$$
where $I$ is an arbitrary element in $\mathbb S$, and
$$V_{n, l}(f)(0)=\frac{1}{2\pi}f(0)\int_{-\pi}^{\pi}K_{n}(u)d u=f(0).$$
We have :

\begin{theorem} ({\bf Quantitative approximation on Cassini cells})\label{Cassini}
Let $q_{0}=x_{0}+I y_{0}\in \mathbb{H}$, with $x_{0}, y_{0}\in \mathbb{R}$, $y_{0}>0$, $I\in \mathbb{S}$ and $R>0$. If $f:\overline{B(x_{0}+y_{0}\mathbb{S};R)}\to \mathbb{H}$ is continuous in $\overline{B(x_{0}+y_{0}\mathbb{S};R)}$ and slice regular in
$B(x_{0}+y_{0}\mathbb{S};R)$, then for any $\varepsilon >0$ there exists a polynomial $P$ such that
$\|f(q)-P(q)\|<\varepsilon$ for all $q\in \overline{B(x_{0}+y_{0}\mathbb{S};R)}$.

In fact, for all  $q\in \overline{B(x_{0}+y_{0}\mathbb{S};R)}$ and $n\in \mathbb{N}$ we have
$$|V_{n, l}(f)(q)-f(q)|\le 3(R+1)\omega_{1}(f; 1/\sqrt{n}),$$
where $V_{n, l}(f)(q)$ is a polynomial.
\end{theorem}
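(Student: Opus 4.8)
The plan is to mirror the proof of Theorem~\ref{appr_balls} essentially line by line; the only genuinely new point is to establish that $V_{n,l}(f)$ is a polynomial, which this time must be read off from the development~(\ref{develop}) rather than from an ordinary power series $\sum_k q^k c_k$. Everything else---the passage from the convolution to $\omega_1$, the kernel estimates, and the final density argument on the closure---will be quoted verbatim from the ball case.

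First I would verify that $V_{n,l}(f)(q)$ is a polynomial in $q$. The key observation is that, since $q$ commutes with $e^{I_q u}$,
$$
(qe^{I_q u}-x_0)^2+y_0^2=q^2 e^{2 I_q u}-2x_0\, qe^{I_q u}+(x_0^2+y_0^2),
$$
a combination of $1$, $qe^{I_q u}$ and $q^2 e^{2I_q u}$ with real coefficients; raising to the $k$-th power and expanding by the multinomial theorem gives $[(qe^{I_q u}-x_0)^2+y_0^2]^k=\sum_{m=0}^{2k}\gamma_{k,m}\, q^m e^{m I_q u}$ with $\gamma_{k,m}\in\mathbb{R}$ depending only on $k,m,x_0,y_0$. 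Plugging this into~(\ref{develop}) and using that $q^m e^{m I_q u}$ and $qe^{I_q u}$ commute yields $f(qe^{I_q u})=\sum_{k}\sum_{m=0}^{2k}\gamma_{k,m}\bigl(q^m e^{m I_q u}c_{2k}+q^{m+1}e^{(m+1)I_q u}c_{2k+1}\bigr)$, and then, since $K_n$ is even, $\frac{1}{2\pi}\int_{-\pi}^{\pi}e^{j I_q u}K_n(u)\,du=\frac{(n!)^2}{(n-j)!(n+j)!}$ for $0\le j\le n$ and $0$ for $j>n$. Interchanging $\sum_k$ with the integral (justified by uniform convergence of~(\ref{develop}), as in~\cite{GaSa}) annihilates every Fourier mode of order $>n$, so only the powers $q^0,\dots,q^n$ remain and $V_{n,l}(f)$ is a polynomial of degree $\le n$.

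The quantitative estimate is then obtained exactly as in the proof of Theorem~\ref{appr_balls}: using $\frac{1}{2\pi}\int_{-\pi}^{\pi}K_n(u)\,du=1$, for $q\in\overline{B(x_0+y_0\mathbb{S};R)}$ one has
$$
\|V_{n,l}(f)(q)-f(q)\|\le\frac{1}{2\pi}\int_{-\pi}^{\pi}\omega_{1}\bigl(f;\|q\|\,\|e^{I_q u}-1\|\bigr)K_n(u)\,du\le\frac{1}{2\pi}\int_{-\pi}^{\pi}\omega_{1}\bigl(f;R|u|\bigr)K_n(u)\,du,
$$
using $\|e^{I_q u}-1\|=2|\sin(u/2)|\le|u|$ and $\|q\|\le R$ on the cell; then, as in the ball case, $\omega_1(f;\lambda\delta)\le(\lambda+1)\omega_1(f;\delta)$ together with $\frac{1}{2\pi}\int_{-\pi}^{\pi}(|u|\sqrt n+1)K_n(u)\,du\le 3$ gives the bound $3(R+1)\omega_1(f;1/\sqrt n)$. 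Finally, since $f$ is continuous on the compact set $\overline{B(x_0+y_0\mathbb{S};R)}$ we have $\omega_1(f;1/\sqrt n)\to0$, so for $\varepsilon>0$ one chooses $n$ with $3(R+1)\omega_1(f;1/\sqrt n)<\varepsilon$ and takes $P=V_{n,l}(f)$; the pointwise bound, proved first where the convolution integral is literally meaningful, extends to the full closed cell by continuity of $P$ and of $f$.

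The step I expect to be the main obstacle is making $V_{n,l}(f)$ well defined on the whole Cassini cell. Unlike a ball, $\overline{B(x_0+y_0\mathbb{S};R)}$ need not be invariant under $q\mapsto qe^{I_q u}$, so $f(qe^{I_q u})$ is a priori undefined for some $q$ in the cell, and one must bypass this by taking~(\ref{develop}) (after term-by-term integration) as the definition of $V_{n,l}(f)$---a bona fide polynomial, hence defined everywhere---and then recovering the pointwise estimate either directly from that series or by restricting to the $q$'s whose orbit circle stays in the closed cell and using density. One should also double-check the inequality $\|q\|\le R$ on $\overline{B(x_0+y_0\mathbb{S};R)}$ and the sub-additivity of $\omega_1$ on this (generally non-convex) set, replacing $R$ in the estimate by $\sup_{q\in\overline{B(x_0+y_0\mathbb{S};R)}}\|q\|$ should that be necessary.
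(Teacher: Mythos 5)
Your proposal follows the paper's proof essentially step by step: the same passage from the convolution to $\omega_{1}$ via the de la Vall\'ee Poussin kernel, and the same binomial-expansion-plus-Fourier-orthogonality argument (annihilating all modes of order $>n$) to show that $V_{n,l}(f)$ is a polynomial of degree $\le n$. Both of your closing cautions are well placed: the paper's own proof does replace $R$ by the constant $M_{R,q_{0}}$ with $M_{R,q_{0}}^{2}=R^{2}+2|x_{0}|\sqrt{R^{2}+y_{0}^{2}}+\|q_{0}\|^{2}$ (so the bound actually proved is $3(M_{R,q_{0}}+1)\omega_{1}(f;1/\sqrt{n})$ rather than the $3(R+1)\omega_{1}(f;1/\sqrt{n})$ of the statement), while the well-definedness of $f(qe^{I_{q}u})$ on the whole Cassini cell, which is not rotation-invariant about the origin, is indeed left unaddressed there and your fix via term-by-term integration of the series (\ref{develop}) is the natural one.
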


{\bf Proof.} Reasoning as in the proof of Theorem \ref{appr_balls}, for any $q\in \overline{B(x_{0}+y_{0}\mathbb{S};R)}$ we obtain
$$\|V_{n, l}(f)(q)-f(q)\|\le \frac{1}{2\pi}\int_{-\pi}^{\pi}\|f(q e^{I_{q} u}) - f(q)\|K_{n}(u)d u$$
$$\le \frac{1}{2\pi}\int_{-\pi}^{\pi}\omega_{1}(f ; \|q\|\cdot \|e^{I_{q} u}-1\|)K_{n}(u)d u\le \frac{1}{2\pi}\int_{-\pi}^{\pi}\omega_{1}(f ; (M_{R, q_{0}})|u|)K_{n}(u)d u$$
$$\le (M_{R, q_{0}}+1)\frac{1}{2\pi}\int_{-\pi}^{\pi}\omega_{1}\left (f ; \frac{1}{\sqrt{n}} |u|\sqrt{n}\right )K_{n}(u)d u$$
$$\le 3(M_{R, q_{0}}+1)\omega_{1}(f ; 1/\sqrt{n}).$$
We have used above that if $q\in \overline{B(x_{0}+y_{0}\mathbb{S};R)}$, then
$$\|q\|^{2}=\|q^{2}\|=\|(q-x_{0}+x_{0})^{2}+y_{0}^{2}-y_{0}^{2}\|=\|(q-x_{0})^{2}+y_{0}^{2}+2x_{0}(q-x_{0})+x_{0}^{2}-y_{0}^{2}\|$$
$$\le \|(q-x_{0})^{2}+y_{0}^{2}\|+2|x_{0}|\cdot \|q-x_{0}\|+|x_{0}^{2}+y_{0}^{2}|$$
$$\le R^{2}+2|x_{0}|\cdot \sqrt{R^{2}+y_{0}^{2}}+\|q_{0}\|^{2}:=M_{R, q_{0}}^{2},$$
which implies $\|q\|\le M_{R, q_{0}}$.

What was remained to prove is that $V_{n, l}(f)(q)$ is a polynomial.

Indeed, by (\ref{develop}) we can write
$$f(q)=\sum_{k=1}^{\infty}\left (\sum_{j=0}^{k}{k \choose j}(q-x_{0})^{2j}\cdot y_{0}^{2(k-j)}\right )c_{2k}+
\sum_{k=1}^{\infty}\left (\sum_{j=0}^{k}{k \choose j}(q-x_{0})^{2j}\cdot q \cdot y_{0}^{2(k-j)}\right )c_{2k+1}$$
$$=\sum_{k=1}^{\infty}\left (\sum_{j=0}^{k}{k \choose j}y_{0}^{2(k-j)}\sum_{p=0}^{2j}{2j \choose p}q^{p}(-1)^{2j-p }x_{0}^{2j-p}\right )c_{2k}$$
$$+\sum_{k=1}^{\infty}\left (\sum_{j=0}^{k}{k \choose j}y_{0}^{2(k-j)}\sum_{p=0}^{2j}{2j \choose p}q^{p}(-1)^{2j-p }x_{0}^{2j-p}\right )q c_{2k+1}                                                 ,$$
which implies
$$V_{n, l}(f)(q)=\frac{1}{2\pi}\int_{-\pi}^{\pi}f(q e^{I_{q} u})K_{n}(u)d u$$
$$=\sum_{k=1}^{\infty}\left (\sum_{j=0}^{k}{k \choose j}y_{0}^{2(k-j)}\sum_{p=0}^{2j}{2j \choose p}q^{p}\left [\frac{1}{2\pi}\int_{-\pi}^{\pi}e^{I_{q} p u}K_{n}(u)du\right ](-1)^{2j-p }x_{0}^{2j-p}\right )c_{2k}$$
$$+\sum_{k=1}^{\infty}\left (\sum_{j=0}^{k}{k \choose j}y_{0}^{2(k-j)}\sum_{p=0}^{2j}{2j \choose p}q^{p}\left [\frac{1}{2\pi}\int_{-\pi}^{\pi}e^{I_{q} p u}K_{n}(u)du\right ](-1)^{2j-p }x_{0}^{2j-p}\right )qc_{2k+1}.$$
But
$$\int_{-\pi}^{\pi}e^{I_{q} p u}K_{n}(u)d u=\int_{-\pi}^{\pi}e^{I_{q}p u}\left (1+\sum_{s=1}^{n}\frac{(n!)^{2}}{(n-s)!(n+s)!}(e^{I_{q} s u}+e^{-I_{q}s u})\right )du$$
$$=\int_{-\pi}^{\pi}e^{I_{q}pu}du+\sum_{s=1}^{n}\frac{(n!)^{2}}{(n-s)!(n+s)!}\left (\int_{-\pi}^{\pi}e^{I_{q}(p+s)u}du+\int_{-\pi}^{\pi}e^{I_{q}(p-s)u}du\right ).$$
Now, taking into account that $\int_{-\pi}^{\pi}e^{I_{q} p u}d u=0$ if $p\not=0$, $\int_{-\pi}^{\pi}e^{I_{q} p u}d u=2\pi$ if $p=0$,
$\int_{-\pi}^{\pi}e^{I_{q}(p+s)u}du=0$ for all $s\ge 1$ and $p\ge 0$ and that $\int_{-\pi}^{\pi}e^{I_{q}(p-s)u}du\not =0$ only for $p=s$, since $s\in \{1, ..., n\}$ it immediately follows that the infinite sum which express $V_{n, l}(f)(q)$ reduces to a finite sum containing powers of $q$ less than $n$ and with all the coefficients written on the right. In conclusion, $V_{n, l}(f)(q)$ is a polynomial, which ends the proof of the theorem. $\hfill \square$

\begin{Rk}{\rm
\begin{enumerate}
\item If in the definitions of $T_{n, l}(f)(q)$ and $V_{n, l}(f)(q)$ and in the statements of Theorems \ref{appr_balls} and \ref{Cassini}, instead of the de la Vall\'ee-Poussin kernel $\frac{1}{2\pi}K_{n}(u)$, we use the Jackson kernel $J_{n}(u)=\frac{1}{\pi}\cdot \frac{3}{2n(2n^{2}+1)}\left (\frac{\sin(n u/2)}{\sin(u/2)}\right )^{4}$, then reasoning as in the complex case in \cite{Ga}, pp. 422-423, we get the better quantitative estimate in terms of $\omega_{2}(f ; 1/n)$.

 \item More in general, if instead of the polynomial operators $T_{n, l}(f)(q)$ and $V_{n, l}(f)(q)$ in the Theorems \ref{appr_balls} and \ref{Cassini}, for any fixed $p\in \mathbb{N}$ we consider the polynomial operators of the form
     $$L_{n, l, p}(f)(q)=-\int_{-\pi}^{\pi}K_{n, r}(u)\sum_{k=1}^{p+1}{p+1 \choose k}f(qe^{I_{q}k u})du,$$
     where $r$ is the smallest integer for which $r\ge (p+2)/2$ and $K_{n, r}(u)=\frac{1}{\lambda_{n, r}}\left (\frac{\sin(n u/2)}{\sin(u/2)}\right )^{2r}$  is the normalized generalized Jackson kernel, then reasoning as in the case of complex variable in \cite{Ga}, p. 424, better quantitative estimates in terms of $\omega_{p+1}(f ; 1/n)$ are obtained.

 \item In the most general case, denoting by $F_{n}(u)=\frac{1}{2}\left (\frac{\sin(n u/2)}{\sin(u/2)}\right )^{2}$ the Fej\'er kernel and
$L_{n}(f)(q)=\frac{1}{n\pi}\int_{-\pi}^{\pi}F_{n}(u)f(qe^{I_{q} u})d u$, defining the new polynomial operators $P_{n}(f)(q)=2L_{2 n}(f)(q)-L_{n}(f)(q)$, similar reasonings with those in the complex case in \cite{Ga}, pp. 424-425 allow us to recapture the statements of Theorems \ref{appr_balls} and \ref{Cassini} with quantitative estimates in terms of $E_{n}(f)$, where $E_{n}(f)=\inf\{\||p-f\|| ; p \mbox{ polynomial of degree } \le n \}$ is the best approximation of $f$ by polynomials of degree $\le n$ and $\||f\||=\sup_{q}\{\|f(q)\|\}$.
\end{enumerate}
}
\end{Rk}

\begin{Rk}\label{Weierstrass}
{\rm If one renounce to the regularity and we suppose only the continuity of $f$ in a compact set of $\mathbb{C}$ (say, for example, the closed unit ball), it is well-known that in general $f$ cannot be uniformly approximated by polynomials. However, in this case, we can prove that $f$ can be uniformly approximated by polynomials in $z$ and $z^{-1}$.
This can be done by virtue of the Weierstrass theorem on approximation by trigonometric polynomials that claims that  a real valued function $g(\theta)$ which is continuous for all $\theta$ and periodic of period $2\pi$ can be uniformly approximated  for all $\theta$ by a trigonometric polynomial of the form $\sum_{n=0}^N \cos(n \theta) a_n + \sin(n \theta) b_n$. It is immediate that if the function $g$ is complex (resp. quaternionic) valued the theorem holds with $a_n,b_n\in\mathbb C$ (resp. $\mathbb H$).
Let us now consider a continuous slice function (see Definition \ref{slice function}) $f(q)=f(x+Iy)=\alpha(x,y)+I\beta(x,y)$. Take $q\in\mathbb H$ such that $\|q\|=1$ and write $q^n=\cos(n\theta)+I\sin(n\theta)$, $q^{-n}=\cos(n\theta)-I\sin(n\theta)$. Then $\cos(n\theta)=\frac 12 (q^n+q^{-n})$, $\sin(n\theta)=\frac 12 I (q^{-n}-q^n)$. Moreover, $f(q)=f(x+Iy)=f(\cos\theta +I\sin\theta)=\alpha(\theta)+I\beta(\theta)$ where $\alpha$, $\beta$ are $\mathbb H$-valued.}
\end{Rk}
As a consequence of this discussion, we present the generalization of Theorem 7, $\S$ 2.5 in \cite{walsh}:
\begin{theorem}
Let $\Sigma$ be the boundary of an open set belonging to  $\mathfrak{R}(\mathbb H)$ and containing the origin and let us assume that $\Sigma\cap\mathbb{C}_I$ is a Jordan curve for any $I\in\mathbb S$. Then a continuous slice function $f$ on $\Sigma$ can be uniformly approximated on $\Sigma$ by polynomials in $q$ and $q^{-1}$.
\end{theorem}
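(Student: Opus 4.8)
The plan is to reduce the problem to a single complex slice, solve it there by the classical complex‑variable theory, and then transport the approximation to all of $\mathbb{H}$ by means of the Representation Formula. Fix an element $I\in\mathbb{S}$ and let $\Omega\in\mathfrak R(\mathbb{H})$ be the open set with $\partial\Omega=\Sigma$. First I would note that $\Omega_I:=\Omega\cap\mathbb{C}_I$ is a connected open subset of $\mathbb{C}_I$ (it is simply connected since $\Omega\in\mathfrak R(\mathbb{H})$) whose boundary in $\mathbb{C}_I$ is contained in $\Sigma_I:=\Sigma\cap\mathbb{C}_I$, a Jordan curve by hypothesis; hence $\Omega_I$ is one of the two complementary components of $\Sigma_I$, and being simply connected it is the bounded interior region, which contains $0$ since $0\in\Omega$. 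In particular $\Sigma$ is compact and $0\notin\Sigma$, so the maps $q\mapsto q^{k}$, $k\in\mathbb{Z}$, are continuous on $\Sigma$; a polynomial in $q$ and $q^{-1}$ is a finite sum $\sum_{k=-m}^{n}q^{k}a_{k}$, $a_{k}\in\mathbb{H}$, and since each $q^{k}$ has the form $A_{k}(x,y)+IB_{k}(x,y)$ with $A_{k},B_{k}$ real, $A_{k}$ even and $B_{k}$ odd in $y$, such a sum is a continuous slice function on $\Sigma$ whose restriction to $\mathbb{C}_I$ is the $\mathbb{C}_I$‑Laurent polynomial $\sum_{k=-m}^{n}z^{k}a_{k}$. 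The strategy is therefore: approximate $f$ on the slice $\Sigma_I$, then keep the same coefficients.

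The core step is to approximate the continuous, $\mathbb{H}$‑valued function $f_I:=f_{|\Sigma_I}$ uniformly on $\Sigma_I$ by $\mathbb{C}_I$‑Laurent polynomials in $z$. Since $\Sigma_I$ is a Jordan curve with the origin in its interior region, this is precisely the content of Theorem 7, $\S$ 2.5 in \cite{walsh}, which holds verbatim for $\mathbb{H}$‑valued functions by Remark \ref{Weierstrass}. For the reader's convenience I would recall the mechanism: the interior and exterior Riemann maps of $\Sigma_I$ extend, by Carath\'eodory's theorem, to homeomorphisms of the closed regions onto the closed unit disk, in particular to a homeomorphism $h:\Sigma_I\to\partial\mathbb{D}_I$ with $\mathbb{D}_I:=\mathbb{B}\cap\mathbb{C}_I$; using the exterior map normalized at $\infty$ and the variable $\zeta=1/z$, Mergelyan's Theorem \ref{Mergelyan} shows that this map, its reciprocal, and hence all integer powers of $h$, lie in the uniform closure on $\Sigma_I$ of the algebra of $\mathbb{C}_I$‑Laurent polynomials; then the Weierstrass trigonometric theorem (in the form recalled in Remark \ref{Weierstrass}) applied to $f_I\circ h^{-1}$ on $\partial\mathbb{D}_I$, followed by composition with $h$, gives the desired approximation. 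Thus, given $\varepsilon>0$, there is $g_I(z)=\sum_{k=-m}^{n}z^{k}a_{k}$, $a_{k}\in\mathbb{H}$, with $\sup_{z\in\Sigma_I}\|f_I(z)-g_I(z)\|<\varepsilon/2$.

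Finally I would transport the estimate. Put $g(q)=\sum_{k=-m}^{n}q^{k}a_{k}$, a polynomial in $q$ and $q^{-1}$ with $g_{|\Sigma_I}=g_I$. Both $f$ and $g$ are continuous slice functions on the axially symmetric set $\Sigma$, so $f-g$ satisfies the Representation Formula (Theorem \ref{Repr_formula}; this remains valid for continuous slice functions, see Remark \ref{reprform_continuous}): for every $q=x+Jy\in\Sigma$, using that $x\pm Iy\in\Sigma\cap\mathbb{C}_I=\Sigma_I$ by axial symmetry,
$$
(f-g)(x+Jy)=\frac{1}{2}(1-JI)(f-g)(x+Iy)+\frac{1}{2}(1+JI)(f-g)(x-Iy),
$$
and since $\|\frac{1}{2}(1\mp JI)\|\le1$ (exactly as in the proof of Theorem \ref{thm 4}) we obtain
$$
\|f(q)-g(q)\|\le\|(f-g)(x+Iy)\|+\|(f-g)(x-Iy)\|\le 2\sup_{\Sigma_I}\|f_I-g_I\|<\varepsilon .
$$
As $\varepsilon>0$ is arbitrary, $f$ is uniformly approximated on $\Sigma$ by polynomials in $q$ and $q^{-1}$.

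I expect the only genuinely delicate ingredient to be the complex statement of Walsh on the Jordan curve $\Sigma_I$: the boundary behaviour of the interior and exterior Riemann maps (Carath\'eodory), together with Mergelyan's theorem, which is what converts those conformal maps into Laurent polynomials in $z$. Everything quaternionic is then essentially automatic: the passage from the single slice $\mathbb{C}_I$ to all of $\mathbb{H}$ costs only the Representation Formula, the structural point being that a $\mathbb{C}_I$‑Laurent polynomial $\sum z^{k}a_{k}$ extends canonically to the continuous slice function $\sum q^{k}a_{k}$, and that a supremum estimate on the slice $\Sigma_I$ dominates, up to the factor $2$, the supremum estimate on the whole of $\Sigma$.
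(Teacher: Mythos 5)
Your argument is correct, but it follows a genuinely different route from the paper's. The paper does not reduce to the classical complex statement on a slice: it constructs the quaternionic intrinsic Riemann map $\Phi\in\mathcal N(\Omega)$ of the interior $\Omega$ (this is where the hypothesis $\Omega\in\mathfrak R(\mathbb H)$ is really used), applies the Weierstrass trigonometric theorem to $f\circ\Phi^{-1}$ on $\partial\mathbb B\cap\mathbb C_J$ to produce $P(\Phi(q),\Phi(q)^{-1})$, transports the estimate to all of $\Sigma$ by the same extension/Representation Formula step you use, and only then invokes the quaternionic Runge theorem (Theorem 4.10 in \cite{runge}) on the annular region between $\Sigma$ and a small sphere about the origin to replace $P(\Phi(q),\Phi(q)^{-1})$ by an honest polynomial $Q(q,q^{-1})$. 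You instead do all the analysis on a single slice: you invoke the classical Theorem 7, \S 2.5 of \cite{walsh} on the Jordan curve $\Sigma_I$ to get a Laurent polynomial $\sum_{k=-m}^{n}z^{k}a_{k}$ directly, observe that $\sum_{k=-m}^{n}q^{k}a_{k}$ is its slice extension, and transport the bound with the Representation Formula. Your route is more economical in quaternionic machinery (no quaternionic Riemann map, no quaternionic Runge theorem) and in fact shows that the full strength of $\Omega\in\mathfrak R(\mathbb H)$ is not needed: axial symmetry of $\Sigma$ plus the hypothesis that each $\Sigma_I$ is a Jordan curve with $0$ in its bounded complementary component already suffices. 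The price is that you must justify the $\mathbb H$-valued version of Walsh's complex theorem; Remark \ref{Weierstrass} only covers the trigonometric step, so you should make explicit that the remaining steps (Carath\'eodory plus Mergelyan applied to powers of the boundary correspondence, which are $\mathbb{C}_I$-valued) carry over by writing $f_I=f_1+f_2 j$ with $f_1,f_2$ being $\mathbb{C}_I$-valued and using that right multiplication by quaternionic constants preserves uniform estimates since the norm is multiplicative. With that detail spelled out, your proof is complete, and arguably yields a mildly more general statement than the paper's.
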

\begin{proof}
By hypothesis the interior $\Omega$ of $\Sigma$ can be mapped onto the unit ball $\mathbb B$ of $\mathbb H$ by a function $\Phi\in\mathcal N(\Omega)$. Let $\Psi$ be the inverse of $\Phi$, i.e. $q=\Psi (w)$, $w\in\mathbb B$. Now observe that by the above mentioned Weierstrass approximation theorem by trigonometric polynomials, for any $\varepsilon >0$ there exists a polynomial $P(w,1/w)$ such that
$$
\| f(\Psi(w))- P(w,1/w)\| <\varepsilon /2, \qquad  w\in \partial\mathbb B\cap \mathbb C_J,
$$
in fact,  $z=\Psi(w)=\cos(\theta)+J\sin(\theta)$ if and only if $w\in \mathbb C_J$. The inequality can be written as
$$
\| f(z)- P(\Phi(z),\Phi(z)^{-1})\| <\varepsilon /2, \qquad z \in \Sigma \cap \mathbb C_J.
$$
Let us  now extend the polynomial $P$ from $\mathbb C_J$ to $\mathbb H$ using the extension formula for slice continuous functions, see \cite{gp1}, (and recall that $\overline{\Phi(z)}=\Phi(\bar z)$):
$$
P(\Phi(q),\Phi(q)^{-1})=\frac{1-IJ}{2}P(\Phi(z),\Phi(z)^{-1}) +\frac{1+IJ}{2}P(\Phi(\bar z),\Phi(\bar z)^{-1})
$$
where $q=x+Iy$ and so, for any $q\in\Sigma$ we have:
\[
\begin{split}
&\left \| f(q)- P(\Phi(q),\Phi(q)^{-1})\right \|\\
&=\left \|   \frac{1-IJ}{2}f(z) +\frac{1+IJ}{2}f(\bar z) - \frac{1-IJ}{2}P(\Phi(z),\Phi(z)^{-1}) -\frac{1+IJ}{2}P(\Phi(\bar z),\Phi(\bar z)^{-1}) \right \| \\
\end{split}
\]
$$
\leq \left \|   \frac{1-IJ}{2} \right \|\cdot  \left \| f(z) -P(\Phi(z),\Phi(z)^{-1})\right \|  + \left \|\frac{1+IJ}{2}\right \|\cdot  \left \| f(\bar z) - P(\Phi(\bar z),\Phi(\bar z)^{-1}) \right \| <\varepsilon.
$$
The function $P(\Phi(q),\Phi(q)^{-1})$ is slice regular in the region $R$ bounded by $\Sigma$ and by the boundary of a ball with center at the origin.
By Theorem 4.10 in \cite{runge} there exists a rational function $Q$ with poles at zero and at infinity, i.e. a polynomial $Q(q,q^{-1})$ in $q$, $q^{-1}$ such that
$$
\|  P(\Phi(q),\Phi(q)^{-1})- Q(q,q^{-1}) \|< \varepsilon /2
$$
for $q\in\overline R$. By taking $q\in\Sigma$ we obtain, in particular, that
$$
\| f(q)-  Q(q,q^{-1}) \|< \varepsilon
$$
which proves the statement.
\end{proof}

{\bf Acknowledgements}. We thank Maxime Fortier Bourque, from the Department of Mathematics, City University of New York, for pointing out to us Theorem \ref{Tm:Riemann intrinsic}.

\end{document}